\documentclass[12pt]{amsart}
\usepackage{a4wide}

\hyphenation{Ha-da-mard}

\newtheorem{theorem}{Theorem}[section]
\newtheorem{definition}[theorem]{Definition}
\newtheorem{proposition}[theorem]{Proposition}
\newtheorem{lemma}[theorem]{Lemma}

\newtheorem{conjecture}[theorem]{Conjecture}

\newtheorem{problem}[theorem]{Problem}

\newenvironment{rlist}
{

\begin{enumerate}}
{\end{enumerate}}

\begin{document}

\title{The quantum algebra of partial Hadamard matrices}

\author{Teo Banica}
\address{T.B.: Department of Mathematics, Cergy-Pontoise University, 95000 Cergy-Pontoise, France. {\tt teo.banica@gmail.com}}

\author{Adam Skalski}
\address{A.S.: Institute of Mathematics of the Polish Academy of Sciences, ul. \'Sniadeckich 8, 00-956 Warszawa, Poland and Faculty of Mathematics, Informatics, and Mechanics, University of Warsaw, Banacha 2, 02-097 Warszawa, Poland. {\tt a.skalski@impan.pl}}

\subjclass[2000]{15B34 (16W30)}
\keywords{Partial permutation, Hadamard matrix}
\thanks{Both authors were partially supported by the HARMONIA NCN grant
2012/06/M/ST1/00169.}

\begin{abstract}
A partial Hadamard matrix is a matrix $H\in M_{M\times N}(\mathbb T)$ whose rows are pairwise orthogonal. We associate to each such $H$ a certain quantum semigroup $G$ of quantum partial permutations of $\{1,\ldots,M\}$ and study the correspondence $H\to G$. We discuss as well the relation between the completion problems for a given partial Hadamard matrix and completion problems for the associated submagic matrix $P\in M_M(M_N(\mathbb C))$, in both cases introducing certain criteria for the existence of the suitable completions.
\end{abstract}

\maketitle


\section*{Introduction}

A partial Hadamard matrix is a rectangular matrix with entries in the circle $\mathbb{T}$, $H\in M_{M\times N}(\mathbb T)$, whose rows are pairwise orthogonal. Observe that we have $M\leq N$. The basic examples are the $N\times N$ complex Hadamard matrices, and the $M\times N$ submatrices of these matrices.

There has been a lot of work on the partial Hadamard matrices, especially in the real case, i.e.\ $H\in M_{M\times N}(\pm1)$. See e.g. \cite{hal}, \cite{kms}, \cite{lle}, \cite{ver}.

We discuss here some classical questions, but primarily focus on quantum algebraic aspects of these matrices. In the square case, $M=N$, the idea is that any complex Hadamard matrix $H\in M_N(\mathbb T)$ produces a certain quantum subgroup $G\subset S_N^+$ of Wang's quantum permutation group \cite{wan}. This construction, inspired from \cite{jsu}, \cite{pop}, and heavily relying on Woronowicz's work in \cite{wo1}, was axiomatized in \cite{bbi}, \cite{bni}. One problem here, still open, is that of finding precise relations between the invariants of $G$ and the invariants of $H$. 

In the rectangular case, some of the results in \cite{bni} extend in a quite straightforward way, the only subtlety being the fact that Wang's quantum permutation group $S_N^+$ must be replaced with $\widetilde{S}_M^+$, the quantum semigroup of quantum partial permutations of $\{1,\ldots,M\}$. The latter, as usual in the quantum setting, is defined via its ``algebra of functions''.  Doing this kind of extension will be our main goal here. Note that this is a different type of `rectangular' generalization to that studied in \cite{bss}, where we investigated certain quantum symmetric spaces for  quantum permutation groups.

On the positive side, we will obtain indeed a correspondence $H\to G$, we will show that the correspondence behaves well with respect to the tensor products $\otimes$, and we will abstractly characterize the matrices $H$ producing semigroups $G$ which are classical. In general one can view $G$ as an algebraic invariant associated to $H$ and hope that the knowledge of $G$ should yield some information on $H$.

On the negative side, we will only partly axiomatize the algebraic structure of $\widetilde{S}_M^+$. Also, we will not consider any of the more advanced results from \cite{bbi}, \cite{bni}, notably those having something to do with Woronowicz's Tannakian duality in \cite{wo2}.

Finally, the purely rectangular setting $M<N$ brings of course a whole number of new problems. Generally speaking, the main problem regarding the partial Hadamard matrices $H\in M_{M\times N}(\mathbb T)$ is that of deciding if they complete or not into $N\times N$ complex Hadamard matrices. We will introduce and discuss here a related question, that we believe of interest: when does the associated submagic matrix $P\in M_M(M_N(\mathbb C))$ complete?

The paper is organized as follows: Section 1 is a preliminary section introducing and motivating the (algebra of functions on the) quantum semigroup of quantum partial permutations, including certain $C^*$-algebraic results, in Section 2 we develop the relation of this semigroup with the partial Hadamard matrices and combinatorial objects such as pre-Latin squares. In Section 3 we discuss completion problems for both submagic matrices and partial Hadamard matrices, and Section 4 contains a few concluding remarks. In the cases where ``rectangular'' proofs are easy modifications of the ``square'' ones available in the literature, we just sketch the arguments.

\section{Quantum partial permutations}

Let $H\in M_{M\times N}(\mathbb T)$ be a partial Hadamard matrix (i.e.\ a matrix whose rows are pairwise orthogonal), with rows denoted $R_1,\ldots,R_M\in\mathbb T^N$. Consider the vectors $\xi_{ij}=R_i/R_j\in\mathbb T^N$, where the division $R_i/R_j$ is defined coordinate-wise. We then have:
$$<\xi_{ij},\xi_{ik}>=\Big\langle\frac{R_i}{R_j},\frac{R_i}{R_k}\Big\rangle=\sum_l\frac{R_{il}}{R_{jl}}\cdot\frac{R_{kl}}{R_{il}}=\sum_l\frac{R_{kl}}{R_{jl}}=\langle R_k,R_j\rangle=\delta_{jk}$$

We have as well $<\xi_{ij},\xi_{kj}>=\delta_{ik}$, so that if $P_{ij}=Proj(\xi_{ij})$ denotes the rank one projection onto the span of the vector $xi_{ij}$ (which is the convention we adopt everywhere below), then the matrix with operator entries $(P_{ij})_{i,j=1, \ldots, M}$ is ``submagic'', in the sense that its entries are orthogonal on rows and columns.

This construction is well-known in the square case $M=N$, where it is very closely related to Wang's quantum permutations \cite{wan}. In order to deal with the general case $M\leq N$, our starting point will be the following definition.

\begin{definition}
A partial permutation of $\{1\,\ldots,N\}$ is a bijection $\sigma:X\simeq Y$, with $X,Y\subset\{1,\ldots,N\}$. We denote by $\widetilde{S}_N$ the set formed by such partial permutations.
\end{definition}

Observe that we have $S_N\subset\widetilde{S}_N$. The embedding $u:S_N\subset M_N(0,1)$ given by permutation matrices can be extended to an embedding $u:\widetilde{S}_N\subset M_N(0,1)$, as follows:
$$u_{ij}(\sigma)=
\begin{cases}
1&{\rm if}\ \sigma(j)=i\\
0&{\rm otherwise}
\end{cases}$$

Looking at the image of this embedding, we see that $\widetilde{S}_N$ is in bijection with the matrices $M\in M_N(0,1)$ having at most one 1 entry on each row and column.

\begin{proposition}
The number of partial permutations is given by
$$|\widetilde{S}_N|=\sum_{k=0}^Nk!\binom{N}{k}^2$$
that is, $1,2,7,34,209,\ldots$, and with $N\to\infty$ we have $|\widetilde{S}_N|\simeq N!\sqrt{\frac{\exp(4\sqrt{N}-1)}{4\pi\sqrt{N}}}$.
\end{proposition}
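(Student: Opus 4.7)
\medskip

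\noindent\emph{Proof plan.} The statement has two parts: an exact finite-sum formula and an asymptotic. The first is elementary, the second requires a saddle-point analysis.

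For the exact formula, I would stratify $\widetilde{S}_N$ by the common cardinality $k=|X|=|Y|$ of the source and target of the partial bijection $\sigma:X\to Y$. For each $k\in\{0,1,\ldots,N\}$, the domain $X\subset\{1,\ldots,N\}$ can be chosen in $\binom{N}{k}$ ways, the codomain $Y\subset\{1,\ldots,N\}$ independently in $\binom{N}{k}$ ways, and a bijection $X\to Y$ in $k!$ ways. Summing over $k$ gives the formula. Equivalently, via the matrix realization $u:\widetilde{S}_N\hookrightarrow M_N(0,1)$ already introduced, one picks the $k$ rows and the $k$ columns that carry a $1$ entry, and then one of the $k!$ permutation matrices on the resulting $k\times k$ sub-board. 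The numerical values $1,2,7,34,209,\ldots$ are just an easy check.

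For the asymptotic, the key step is to recognize the sequence. Rewriting
$$k!\binom{N}{k}^2=\frac{(N!)^2}{k!\,((N-k)!)^2}$$
and substituting $j=N-k$, one obtains
$$|\widetilde{S}_N|=N!\sum_{j=0}^{N}\binom{N}{j}\frac{1}{j!}=N!\,L_N(-1),$$
where $L_N$ is the $N$-th Laguerre polynomial. Thus the task reduces to the classical estimate
$$L_N(-1)\;\sim\;\frac{e^{2\sqrt{N}-1/2}}{2\sqrt{\pi}\,N^{1/4}},$$
which equals $\sqrt{\exp(4\sqrt{N}-1)/(4\pi\sqrt{N})}$ and therefore gives the claim.

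The derivation of this estimate is where the work lies. I would apply the saddle-point method to the generating function $\sum_{N\geq 0}L_N(-1)z^N=\frac{1}{1-z}\exp\bigl(\frac{z}{1-z}\bigr)$, with Cauchy's formula on a circle of radius $r_N$. The saddle-point equation, obtained by differentiating $-(N+1)\log z+\log\frac{1}{1-z}+\frac{z}{1-z}$, yields $r_N=1-N^{-1/2}+O(N^{-1})$; substituting back and carrying out the standard Gaussian expansion of the contour integral produces the stated factor. Alternatively, one can work entirely on the real side: the terms $a_k=k!\binom{N}{k}^2$ satisfy $a_{k+1}/a_k=(N-k)^2/(k+1)$, which crosses $1$ near $k_*=N-\sqrt{N}$, so a direct Laplace-type analysis of the partial sum, using Stirling on $a_{k_*}$ and a Gaussian approximation of width $\sim N^{1/4}$ around $k_*$, yields the same asymptotic. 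The main technical obstacle is the careful control of the tails of either the contour integral or the sum; everything else is bookkeeping, and this is presumably why the paper quotes the result rather than deriving it from scratch.
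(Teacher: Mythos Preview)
Your argument for the exact formula is identical to the paper's: stratify by $k=|X|=|Y|$, count choices of domain, codomain, and bijection, and note the equivalent $M_N(0,1)$ interpretation.

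For the asymptotic, your proposal actually goes well beyond the paper, which simply refers the reader to the OEIS entry A002720 without any derivation. Your identification $|\widetilde{S}_N|=N!\,L_N(-1)$ via the substitution $j=N-k$ is correct and is the standard route to the asymptotic; the saddle-point analysis of the generating function $(1-z)^{-1}\exp\bigl(z/(1-z)\bigr)$ you outline, or the equivalent Laplace-type estimate around $k_*\approx N-\sqrt{N}$, are exactly the classical arguments that produce the stated constant. Your closing guess that the paper quotes rather than derives this is on the mark. So nothing is missing; you have supplied a genuine proof sketch where the paper gives only a citation.
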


\begin{proof}
Indeed, in terms of partial bijections $\sigma:X\simeq Y$ as in Definition 1.1, we can set $k=|X|=|Y|$, and this leads to the formula in the statement. Equivalently, in the $M_N(0,1)$ picture, $k$ is the number of 1 entries, $\binom{N}{k}^2$ corresponds to the choice of the $k$ rows and $k$ columns for these $1$ entries, and $k!$ comes from positioning the 1 entries. Finally, for the asymptotic formula, see the On-Line Encyclopaedia of Integer Sequences (https://oeis.org/) sequence A002720.
\end{proof}

As an example, we have $|\widetilde{S}_2|=7$, the partial permutations of $\{1,2\}$ being the identity, the transposition, the 4 partial bijections of type $i\to j$, and the null map.

Let us discuss now the quantum version of the above notions.

\begin{definition}
Let $A$ be a unital $C^*$-algebra. A submagic matrix is a matrix $u=(u_{ij})_{i,j=1}^N\in M_N(A)$ whose entries are orthogonal projections (i.e.\ satisfy the equations $u_{ij} = u_{ij}^* =u_{ij}^2$),  pairwise orthogonal in each row and each column, so that $u_{ij} u_{ik}=0$ and $u_{ji} u_{ki}=0$ for all $i,j,k=1,\ldots, N$, $j\neq k$. We let $\widetilde{A}_s(N)$ be the universal  unital $C^*$-algebra generated by the entries of a $N\times N$ submagic matrix. The unital $^*$-algebra generated by the elements $u_{ij}$ inside $\widetilde{A}_s(N)$ will be denoted $\widetilde{\mathcal{A}}_s(N)$.
\end{definition}

Several examples of submagic matrices will appear implicitly and explicitly in what follows -- in particular they turn out to be related to partial Hadamard matrices. If in addition to the above conditions the sum of entries of a submagic matrix $u$ is equal to $1$ on each row and column, i.e.\ $\sum_{j=1}^N u_{ij} = \sum_{j=1}^N u_{ji}=1$ for all $j=1,\ldots, N$, we say that $u$ is \emph{magic}. Observe that we have a quotient map $\widetilde{A}_s(N)\to A_s(N)$, where at right we have Wang's $C^*$-algebra $A_s(N)$, generated by the entries of the universal $N\times N$ magic matrix \cite{wan}. Recall that the universality above means that $\widetilde{A}_s(N)$ is a $C^*$-algebra equipped with a certain submagic matrix $(u_{ij})\in M_N(\widetilde{A}_s(N))$ and for any other pair $B, (v_{ij})\in M_N(B)$, where $B$ is a unital $C^*$-algebra and $(v_{ij})$ is a submagic matrix, there exists a unique unital $^*$-homomorphism $\pi:A\to B$ such that $\pi(u_{ij})=v_{ij}$ for $i,j=1,\ldots,n$.

As a first remark, we have $\widetilde{A}_s(1)=\mathbb C^2$ (this is a unital $C^*$-algebra generated by a single orthogonal projection). Already at $N=2$ the situation becomes interesting.

We begin by identifying the universal (non-unital) $C^*$-algebra generated by two projections. It is well-known (see for example \cite{RaeSin}) that the universal \emph{unital} $C^*$-algebra generated by two projections, to be denoted by $B$, is isomorphic to the group $C^*$-algebra $C^*(D_{\infty})$ (where $D_{\infty}\approx \mathbb{Z}_2 \star  \mathbb{Z}_2$ is the dihedral group) and further isomorphic to the algebra $\{f\in C([0,1];M_2), f(0), f(1) \textup{ diagonal}\}$. In these identifications the canonical generating projections $P,Q$ are given respectively by $P=\frac{1}{2} (1-a)\in C^*(D_{\infty})$, $Q=\frac{1}{2} (1-b)\in C^*(D_{\infty})$  (with $a,b$ canonical free generators of $\mathbb{Z}_2 \star  \mathbb{Z}_2$) and by $P=\begin{bmatrix}1 & 0 \\ 0 & 0 \end{bmatrix} \in C([0,1];M_2)$, $Q=\begin{bmatrix} x & \sqrt{x(1-x)} \\ \sqrt{x(1-x)}  & 1-x \end{bmatrix} \in C([0,1];M_2)$. It is easy to see that there exists a character $\epsilon:C^*(D_{\infty})\to \mathbb{C}$ determined uniquely by the conditions $\epsilon(P)=\epsilon(Q)=0$ (in the $C([0,1];M_2)$ picture its existence is immediate: it is given by the formula $\epsilon(f) = f_{2,2}(1)$).

\begin{lemma}
The universal  $C^*$-algebra generated by two projections, say $D$, is isomorphic to $\textup{Ker}\, \epsilon$, which is the $C^*$-subalgebra of $B$ generated by projections $P,Q$. It is non-unital, and further isomorphic to $\{f\in C([0,1];M_2), f(0), f(1) \textup{ diagonal}, f_{2,2}(1)=0\}$.
\end{lemma}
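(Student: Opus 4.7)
The plan is to show that the unitization $D^\sim$ satisfies the universal property characterising $B$, yielding an isomorphism $D^\sim\cong B$, and then to identify $D\subseteq D^\sim$ with $\textup{Ker}\,\epsilon$. The explicit description of the last algebra then follows at once by applying the given formula $\epsilon(f)=f_{2,2}(1)$.

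For the universal-property step, the generators $p,q\in D$ remain projections in $D^\sim$ and, together with $1_{D^\sim}$, generate $D^\sim$ as a unital $C^*$-algebra. Given any unital $C^*$-algebra $A$ and any projections $p',q'\in A$, the universal property of $D$ supplies a (non-unital) $^*$-homomorphism $D\to A$ sending $p,q$ to $p',q'$, which extends uniquely to a unital $^*$-homomorphism $D^\sim\to A$. Hence $D^\sim$ inherits the universal property of $B$, and by uniqueness of universal objects there is a canonical isomorphism $\Theta\colon D^\sim\to B$ with $\Theta(p)=P$ and $\Theta(q)=Q$.

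Next, $\Theta$ transports the canonical character $\epsilon_D\colon D^\sim\to\mathbb{C}$ (whose kernel is exactly $D$) to a character of $B$ annihilating $P$ and $Q$. In the group-algebra picture $B\cong C^*(\mathbb{Z}_2\star\mathbb{Z}_2)$ with $a,b$ the canonical order-two generators and $P=(1-a)/2$, $Q=(1-b)/2$, characters of $B$ are precisely characters of $D_\infty$; the condition $\chi(P)=\chi(Q)=0$ forces $\chi(a)=\chi(b)=1$, singling out the trivial character — which is $\epsilon$. Thus $\Theta$ restricts to an isomorphism $D\cong\textup{Ker}\,\epsilon$. Since $D$ is generated as a $C^*$-algebra by $p,q$, the image $\textup{Ker}\,\epsilon$ is the $C^*$-subalgebra of $B$ generated by $P,Q$, as asserted; the claimed explicit description then amounts to reading off the equation $f_{2,2}(1)=0$ from the formula for $\epsilon$.

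For non-unitality, if $D$ had a unit $e_D$, then $(e_D,0)\in D^\sim$ would be a nontrivial central projection, corresponding under $\Theta$ to a nontrivial central projection in $B$; but in the function-algebra realisation the centre of $B$ is readily seen to be $\{\lambda\cdot I_{M_2}:\lambda\in C[0,1]\}\cong C[0,1]$ (scalarity on $(0,1)$ by Schur, forcing scalarity at the endpoints by continuity), which contains no nontrivial idempotents by connectedness of $[0,1]$. The only subtlety in the argument is the uniqueness of $\epsilon$ as the character of $B$ annihilating both $P$ and $Q$; this is cleanest in the $D_\infty$-picture as above, and is what ties the abstract universal construction to the concrete model.
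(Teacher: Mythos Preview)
Your proof is correct and takes a genuinely different route from the paper's.

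The paper works concretely inside $B=C^*(D_\infty)$: it first shows directly that the subalgebra $C^*(P,Q)$ has the required universal property (by restricting representations of $B$), and then proves the equality $C^*(P,Q)=\textup{Ker}\,\epsilon$ via an approximation argument through the dense group algebra $\mathbb{C}[D_\infty]$. Non-unitality is obtained by arguing in $\ell^2(D_\infty)$ that any projection dominating both $P$ and $Q$ must be $1$. Your argument instead runs the universal-property comparison at the level of the unitizations, obtaining $D^\sim\cong B$ in one stroke, and then pins down $D$ inside $B$ by the uniqueness of the character annihilating $P$ and $Q$; this bypasses the density step entirely. Your non-unitality argument --- a hypothetical unit of $D$ would yield a nontrivial central projection in $B$, contradicting $Z(B)\cong C[0,1]$ --- is cleaner and more structural than the paper's computation in $\ell^2(D_\infty)$. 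The paper's approach has the small advantage of never needing to name the abstract object $D$ or its unitization, staying entirely inside the known model; yours has the advantage of making the identification $D\cong\textup{Ker}\,\epsilon$ and the non-unitality essentially formal once $D^\sim\cong B$ is established.
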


\begin{proof}
Suppose that $p,q$ are two orthogonal projections in some $B(\mathsf{H})$. Then, by the facts stated before the lemma, there exists a unique unital representation $\pi:C^*(D_{\infty}) \to B(\mathsf{H})$ such that $\pi(P)=p$, $\pi(Q)=q$. Considering the restriction of $\pi$ to $C^*(P,Q)\subset C^*(D_{\infty})$ yields immediately the universal property of $C^*(P,Q)$. The inclusion $\textup{Ker}\, \epsilon \subset C^*(P,Q)$ is immediate. For the other one we use the fact that $\mathbb{C}[D_{\infty}]\subset C^*(D_{\infty})$ is simply the unital $^*$-algebra generated by $P$ and $Q$. It is then easy to see that  the intersection of
$\textup{Ker}\, \epsilon $ and $\mathbb{C}[D_{\infty}]$ is indeed the $^*$-algebra generated by $P$ and $Q$. Then we just take an arbitrary element $x \in \textup{Ker}\, \epsilon $ and approximate it by a sequence of elements in $\textup{Ker}\, \epsilon \cap \mathbb{C}[D_{\infty}]$ to see that $x \in C^*(P,Q)$.

The fact that $\textup{Ker}\, \epsilon$ is non-unital follows from the analysis of the concrete representation: we simply show that if $R$ is a projection in $C^*(D_{\infty})$ which dominates both $P$ and $Q$, then it must be equal to $1$ (indeed, pass to the complements and analyse possible vectors in $\ell^2(D_{\infty})$ left invariant by $R^{\perp}$). The remaining formula is immediate.
\end{proof}

Observe that $C^*(D_{\infty})$ is equipped with the coproduct, which is determined by the formulas $\Delta(P)=P\otimes P^{\perp} + P^{\perp} \otimes P$,
$\Delta(Q)=Q\otimes Q^{\perp} + Q^{\perp} \otimes Q$.

We are ready for the description of $\tilde{A}_s(2)$.

\begin{theorem}
$\tilde{A}_s(2)$ is isomorphic to the unitization of $D\oplus D$. It can be explicitly realised in the following isomorphic forms:
\begin{enumerate}
\item $ (\textup{Ker}\, \epsilon \oplus \textup{Ker}\, \epsilon) + \mathbb{C}1 \subset C^*(D_{\infty}) \oplus C^*(D_{\infty})$;
\item  $\{ (x,y) \in  C^*(D_{\infty}) \oplus C^*(D_{\infty}):\epsilon(x)=\epsilon(y)\}$;
\item $ \{f\in C([0,1];M_2\oplus M_2): f(0), f(1) \textup{ diagonal and } f(1)_{2,2}=f(1)_{4,4}\}$.
\end{enumerate}
\end{theorem}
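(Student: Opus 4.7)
The plan is to unpack the submagic relations in the $N=2$ case and observe that they couple the four generators $u_{ij}$ into two independent pairs that annihilate one another. Writing out row and column orthogonality gives
$$u_{11}u_{12}=u_{21}u_{22}=u_{11}u_{21}=u_{12}u_{22}=0,$$
together with the adjoint equations. In particular each of the ``diagonal'' projections $u_{11},u_{22}$ kills each of the ``off-diagonal'' ones $u_{12},u_{21}$, whereas no further relation is imposed inside either pair. This is precisely the relation profile of the generators of $(D\oplus D)^+$, with the two copies of $D$ playing the roles of the two pairs.

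To turn this into an isomorphism I construct two mutually inverse unital $*$-homomorphisms. Let $P,Q$ denote the canonical generating projections of $D$, and inside $D\oplus D$ set
$$v_{11}=(P,0),\quad v_{22}=(Q,0),\quad v_{12}=(0,P),\quad v_{21}=(0,Q).$$
Products across the two summands vanish, so $(v_{ij})$ is a submagic $2\times 2$ matrix in $(D\oplus D)^+$, and the universal property of $\widetilde{A}_s(2)$ yields $\phi:\widetilde{A}_s(2)\to(D\oplus D)^+$ with $\phi(u_{ij})=v_{ij}$. For the reverse direction I apply the universal property of $D$ (Lemma~1.4) twice, producing non-unital $*$-homomorphisms $\iota_1,\iota_2:D\to\widetilde{A}_s(2)$ that send $(P,Q)$ to $(u_{11},u_{22})$ and to $(u_{12},u_{21})$, respectively. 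The relations above force $\iota_1(D)\cdot\iota_2(D)=0$, so $(d_1,d_2)\mapsto\iota_1(d_1)+\iota_2(d_2)$ is a $*$-homomorphism $D\oplus D\to\widetilde{A}_s(2)$, which extends unitally to $\psi:(D\oplus D)^+\to\widetilde{A}_s(2)$. Since $\phi\circ\psi$ and $\psi\circ\phi$ agree with the identity on the generators $(v_{ij})$ and $u_{ij}$ (plus the unit), both compositions are the identity, establishing form (i).

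The remaining two pictures are algebraic identifications. For (ii) I split $x\in C^*(D_\infty)$ as $x=(x-\epsilon(x)1)+\epsilon(x)1$; this shows that $(\textup{Ker}\,\epsilon\oplus\textup{Ker}\,\epsilon)+\mathbb C(1,1)$ coincides with the set of pairs $(x,y)\in C^*(D_\infty)\oplus C^*(D_\infty)$ satisfying $\epsilon(x)=\epsilon(y)$. For (iii) I transport (ii) through the function-algebra description $C^*(D_\infty)\cong\{f\in C([0,1];M_2):f(0),f(1)\text{ diagonal}\}$ recalled just before Lemma~1.4, use the character formula $\epsilon(f)=f_{2,2}(1)$, and identify $C([0,1];M_2)\oplus C([0,1];M_2)\cong C([0,1];M_2\oplus M_2)$; the balance condition $\epsilon(x)=\epsilon(y)$ then reads $f(1)_{2,2}=f(1)_{4,4}$, while the endpoint-diagonal condition is inherited summand by summand.

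The main delicate step is promoting the orthogonality $\iota_1(D)\cdot\iota_2(D)=0$ from the four generator-pair products to the whole subalgebras: any product of a $*$-polynomial in the $\iota_1$-generators with a $*$-polynomial in the $\iota_2$-generators expands into monomials each containing a pair of adjacent factors from opposite sides, which vanishes by the checked relations, and continuity extends the vanishing to norm closures. Everything else is routine manipulation of universal properties and of the concrete function-algebra model from Lemma~1.4.
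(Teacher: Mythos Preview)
Your argument is correct and rests on the same structural observation as the paper: the submagic relations force the diagonal pair $\{u_{11},u_{22}\}$ and the off-diagonal pair $\{u_{12},u_{21}\}$ to annihilate each other while remaining unconstrained internally, so each pair generates a copy of $D$ and the two copies sit orthogonally.

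The execution differs slightly. The paper verifies directly that $(D\oplus D)^+$ satisfies the universal property of $\widetilde{A}_s(2)$: given any submagic $\left(\begin{smallmatrix}p&q\\r&s\end{smallmatrix}\right)$ in $B(\mathsf{H})$, it observes geometrically that the ranges of $p,s$ span a subspace $\mathsf{H}_1$ orthogonal to the span $\mathsf{H}_2$ of the ranges of $q,r$, and then invokes Lemma~1.4 on each summand. You instead build explicit inverse morphisms $\phi,\psi$ using the universal properties on both sides, with the orthogonality $\iota_1(D)\cdot\iota_2(D)=0$ checked algebraically on monomials and extended by continuity. Your route is a bit more self-contained (no passage to a Hilbert-space representation), while the paper's Hilbert-space picture makes the decomposition visually immediate; both are short and the derivations of forms~(ii) and~(iii) coincide.
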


\begin{proof}
Note first that as $D\oplus D$ is non-unital, it follows that whenever it is faithfully represented on a Hilbert space (or embedded into a unital $C^*$-algebra) then its unitization is isomorphic to the $C^*$-algebra spanned by the image and the unit in the target algebra (see \cite{Wegge}). Thus once we prove the first statement, the other two will follow (the equality of sets in (1) and (2) is easy to check).

So assume then that $\begin{bmatrix} p & q \\ r &s \end{bmatrix}$ is a sub-magic unitary, with entries in some $B(\mathsf{H})$. By the orthogonality relations we see that if $\mathsf{H}_1$ is the Hilbert space spanned by the images of $p$ and $s$ and  $\mathsf{H}_2$ is the Hilbert space spanned by the images of $q$ and $r$, then $\mathsf{H}_1 \perp \mathsf{H}_2 $ (thus we can view for example $p$ as a projection in $B(\mathsf{H}_1)$, without any further comments).  Let $P, S$ denote the canonical projections in the first copy of $D$ (say $D_1$) and $Q,R$ the ones in the second copy of $D$ (say $D_2$). By the lemma above there exist (unique) representations $\pi_1:D\to B(\mathsf{H}_1)$ and $\pi_2:D\to B(\mathsf{H}_2)$ such that $\pi_1(P)=p$, $\pi_1(S)=s$,
$\pi_2(Q)=q$, $\pi_2(R)=r$. Put $\pi=\pi_1\oplus \pi_2:D\oplus D\to B(\mathsf{H})$ and denote its extension to the unitization of $D\oplus D$ by the same letter. It is the immediate that $\pi$ is a well-defined unital representation of the unitization of $D\oplus D$ mapping $P$ to $p$ and so on (obviously it is the unique representation of the unitization of $D \oplus D$ with this property).
\end{proof}

In the general case now, $N\in\mathbb N$, observe that we have an embedding $\Psi$ and a quotient map $\Psi$, satisfying $\Psi\Phi=id$, as follows:
\begin{eqnarray*}
\Phi&:&\widetilde{A}_s(N)\subset\widetilde{A}_s(N+1)\\
\Psi&:&\widetilde{A}_s(N+1)\to\widetilde{A}_s(N)
\end{eqnarray*}

Indeed, such maps can be obtained by truncating and completing the submagic matrix, in the obvious way. In particular, $\widetilde{A}_s(N)$ is not commutative, for any $N\geq 2$. We do not know its exact structure (note that Wang's algebras $A_s(N)$ become noncommutative only for $N\geq 4$).

Let us investigate now the commutative case. We have here the following result.

\begin{proposition}
The submagic matrices with commuting entries appear as follows:
\begin{enumerate}
\item Pick a subset $S\subset\widetilde{S}_N$, and set $A=C(S)$.
\item Set $u_{ij}(\sigma)=1$ if $\sigma(j)=i$, and $u_{ij}(\sigma)=0$ otherwise.
\end{enumerate}
\end{proposition}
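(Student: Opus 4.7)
The plan is to realise $C(\widetilde{S}_N)$, together with the canonical submagic matrix defined in (2), as the universal commutative example, and then argue that any commutative submagic matrix factors through a quotient of it of the form $C(S)$ for some $S\subset\widetilde{S}_N$.

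First I would check that the construction in (1)--(2) does produce a submagic matrix. Each $u_{ij}\in C(\widetilde{S}_N)$ is $\{0,1\}$-valued, hence a projection; the row orthogonality $u_{ij}u_{ik}=0$ for $j\ne k$ holds because no partial permutation $\sigma$ can satisfy $\sigma(j)=i=\sigma(k)$ simultaneously, and the column orthogonality is symmetric. Restricting to a subset $S\subset\widetilde{S}_N$ clearly preserves these relations.

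For the converse, suppose $u=(u_{ij})$ is a submagic matrix with commuting entries in some $M_N(A)$, and let $B\subset A$ be the unital commutative $C^*$-subalgebra generated by the $u_{ij}$. By Gelfand duality, $B=C(X)$ for a compact Hausdorff space $X$. Each $u_{ij}$ is a projection in $C(X)$, hence the indicator $\chi_{E_{ij}}$ of a clopen set $E_{ij}\subset X$, and for every point $x\in X$ the numerical matrix $u(x)=(u_{ij}(x))\in M_N(\{0,1\})$ is itself submagic. By the discussion after Definition 1.1 such a matrix corresponds to a unique partial permutation $\sigma_x\in\widetilde{S}_N$, which defines a map $\phi:X\to\widetilde{S}_N$ sending $x$ to $\sigma_x$.

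The last step is the analysis of $\phi$. It is continuous because $\widetilde{S}_N$ is finite and each fibre $\phi^{-1}(\{\sigma\})$ is a finite intersection of the clopen sets $E_{ij}$ (for those $(i,j)$ with $\sigma(j)=i$) and their complements. It is injective because the $u_{ij}$ generate $B$ and therefore separate points of $X=\mathrm{Spec}(B)$. Hence $\phi$ is a homeomorphism onto a clopen subset $S:=\phi(X)\subset\widetilde{S}_N$, which under the induced isomorphism $B\cong C(S)$ identifies $u_{ij}$ with the function $\sigma\mapsto\delta_{\sigma(j),i}$ of the construction in (2). No step looks particularly hard; the only mild bookkeeping subtlety is the reduction to the subalgebra generated by the entries, since the original $A$ may carry commutative information that is irrelevant to the submagic matrix itself.
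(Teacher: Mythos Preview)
Your proof is correct and follows essentially the same Gelfand-duality approach as the paper: both identify the spectrum of the commutative $C^*$-algebra generated by the $u_{ij}$ with a subset of $\widetilde{S}_N$, the paper via the atoms $\bigcap_{ij}X_{ij}^{e_{ij}}$ of the Boolean algebra generated by the supports, and you via the pointwise-evaluation map $x\mapsto\sigma_x$. These are two phrasings of the same argument, yours being somewhat more explicit about continuity and injectivity.
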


\begin{proof}
This follows from the Gelfand theorem. Indeed, if we are given a submagic matrix with commuting entries then we can assume its entries belong to a $C^*$-algebra of the form $C(S)$ for a certain compact set $S$, with $u_{ij}=\chi(X_{ij})$, where $X_{ij}\subset S$. Further we may assume that the algebra in question is spanned by the characteristic functions of all the intersections of type $\bigcap_{ij}X_{ij}^{e_{ij}}$ with $e_{ij}=1$ (null operation) or $e_{ij}=0$ (complementation operation), and the row/column disjointness condition on the sets $X_{ij}$ tell us precisely that the matrix $e=(e_{ij})$ must have at most 1 nonzero entry in each row and column, as desired.
\end{proof}

Finally, let us discuss some Hopf algebraic questions. Our basic result here is the following (note that here and further below for an arbitrary algebra $A$ we denote by $A_{class}$ its `commutative version', i.e.\ the quotient of $A$ by its commutator ideal).

\begin{proposition}
The following hold:
\begin{enumerate}
\item $\widetilde{S}_N$ is a semigroup, with multiplication given by $(\sigma\tau)(j)=i$ when $\tau(j)$ is defined, and $\sigma(\tau(j))=i$, and with $(\sigma\tau)(j)$ being undefined, otherwise.

\item The formula $\Delta(u_{ij})=\sum_ku_{ik}\otimes u_{kj}$ (or rather its extension given by the universal property) defines a coassociative unital $^*$-homomorphism from $\widetilde{A}_s(N)$ to $\widetilde{A}_s(N) \otimes \widetilde{A}_s(N)$. Similarly $\epsilon(u_{ij}) = \delta_{ij}$ defines a character on $\widetilde{A}_s(N)$. These maps  make $\widetilde{A}_s(N)$ a $C^*$-bialgebra, so that it can be viewed as the algebra of continuous functions on a compact quantum semigroup $\widetilde{S}_N^+$.

\item The canonical quotient map $\widetilde{A}_s(N)\to C(\widetilde{S}_N)$ commutes with the comultiplications, and thus can be interpreted as an embedding of quantum semigroups $\widetilde{S}_N\subset\widetilde{S}_N^+$.

\item We have an isomorphism $\widetilde{A}_s(N)_{class}\simeq C(\widetilde{S}_N)$, and so the semigroup $\widetilde{S}_N$ is the maximal classical subsemigroup of $\widetilde{S}_N^+$.
\end{enumerate}
\end{proposition}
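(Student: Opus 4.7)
The plan is to dispatch the four parts in order, with the bulk of the real work in (2) and (4).

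Part (1) is essentially formal: writing a partial permutation as a bijection $\sigma:X\simeq Y$, the product $\sigma\tau$ described in the statement is just composition of partial functions (with the domain shrunk to $\tau^{-1}(Y\cap X')$ where $\sigma:X'\simeq Y'$), so associativity is inherited from associativity of ordinary function composition, and the description records the obvious compatibility with the embedding $u:\widetilde{S}_N\hookrightarrow M_N(0,1)$.

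Part (2) reduces, via the universal property of $\widetilde{A}_s(N)$, to verifying that $v_{ij}:=\sum_k u_{ik}\otimes u_{kj}$ assembles into a submagic matrix $V\in M_N(\widetilde{A}_s(N)\otimes\widetilde{A}_s(N))$. Self-adjointness of each $v_{ij}$ is immediate, and $v_{ij}^2=v_{ij}$ follows by expanding and using row orthogonality $u_{ik}u_{il}=0$ ($k\ne l$) in the first tensor leg to collapse to the diagonal terms, which sum back to $v_{ij}$. The row relation $v_{ij}v_{ij'}=0$ ($j\ne j'$) first collapses to a single sum over $k$ by row orthogonality in the first leg, and then vanishes by row orthogonality in the second; column orthogonality is symmetric, with the two legs interchanged. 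Coassociativity is a one-line check, since both $(\Delta\otimes\mathrm{id})\Delta(u_{ij})$ and $(\mathrm{id}\otimes\Delta)\Delta(u_{ij})$ equal $\sum_{k,l}u_{ik}\otimes u_{kl}\otimes u_{lj}$. For the counit, the scalar matrix $(\delta_{ij})$ is trivially submagic, so universality gives the character $\epsilon$, and the counit identities reduce on generators to $\sum_k\delta_{ik}u_{kj}=u_{ij}$.

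For (3), let $\pi:\widetilde{A}_s(N)\to C(\widetilde{S}_N)$ be the canonical quotient. Evaluating both sides on a generator $u_{ij}$ at a pair $(\sigma,\tau)$ gives $(\pi\otimes\pi)\Delta(u_{ij})(\sigma,\tau)=\sum_k[\sigma(k)=i]\,[\tau(j)=k]$, which equals $1$ precisely when $\tau(j)$ is defined and $\sigma(\tau(j))=i$; by part (1) this matches $\Delta_{\widetilde{S}_N}\pi(u_{ij})(\sigma,\tau)=[(\sigma\tau)(j)=i]$. As both composites are $^*$-homomorphisms, agreement on generators propagates to the whole algebra.

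For (4), Proposition 1.6 identifies every commutative $C^*$-algebra generated by an $N\times N$ submagic matrix with some $C(S)$, $S\subset\widetilde{S}_N$, carrying the canonical indicator matrix. Applied to $\widetilde{A}_s(N)_{class}$ this gives $\widetilde{A}_s(N)_{class}\simeq C(S')$ for some $S'\subset\widetilde{S}_N$; comparing against $C(\widetilde{S}_N)$ through the surjection induced by (3) and unraveling the indicator data forces $S'=\widetilde{S}_N$, so the isomorphism of the statement holds. Combined with (3), this exhibits $C(\widetilde{S}_N)$ as a $C^*$-bialgebra quotient of $\widetilde{A}_s(N)$ dual to the inclusion $\widetilde{S}_N\subset\widetilde{S}_N^+$; any classical quantum subsemigroup of $\widetilde{S}_N^+$ corresponds to a commutative bialgebra quotient of $\widetilde{A}_s(N)$, which must factor through the abelianisation, proving maximality. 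The only spot needing genuine care is part (2), where one must correctly align row versus column orthogonality with the first versus second tensor leg; everything else is formal manipulation or direct appeal to Proposition 1.6.
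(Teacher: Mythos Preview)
Your proposal is correct and follows essentially the same approach as the paper's proof, simply filling in details where the paper is terse: the paper dismisses (1) and (2) as ``clear from definitions,'' carries out for (3) exactly the pointwise computation you describe, and for (4) just cites Proposition~1.6. Your expanded verification in (2) that $(v_{ij})$ is submagic and your argument in (4) that $S'=\widetilde{S}_N$ via the surjection from (3) are the natural elaborations the reader would supply.
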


\begin{proof}
The first two assertions are clear from definitions. Regarding now (3), consider the map $\Delta:C(\widetilde{S}_N)\to C(\widetilde{S}_N)\otimes C(\widetilde{S}_N)$ given by $\Delta(u_{ij})=\sum_ku_{ik}\otimes u_{kj}$. We have:
$$\Delta(u_{ij})(\sigma\otimes\tau)=\sum_ku_{ik}(\sigma)u_{kj}(\tau)=\sum_k\delta_{i,\sigma(k)}\delta_{k,\tau(j)}$$

Since at right we have products of Kronecker symbols, we conclude that:
$$\Delta(u_{ij})(\sigma\otimes\tau)=
\begin{cases}
1&{\rm if}\ \exists k,\tau(j)=k,\sigma(k)=i\\
0&{\rm otherwise}
\end{cases}$$

Thus $\Delta$ comes from the semigroup structure of $\widetilde{S}_N$, and we are done.

Finally, (4) follows from Proposition 1.6.
\end{proof}

As a first example, the isomorphism in Theorem 1.5 embeds $\widetilde{A}_s(2)$ into the Hopf algebra doubling of $C^*(D_\infty)$, with the non-standard comultiplication $\Delta(p)=p\otimes p$, $\Delta (q)=q\otimes q$. In fact, $\widetilde{A}_s(2)$ is a subalgebra of $C^*(D_\infty)_{\beta}$, where $\beta(p)=q$, $\beta (q)=p$ (for the description of the doubling construction and the notation above see \cite{sso}).

In general now, observe that $\widetilde{A}_s(N)$ admits also a ``subantipode'' map, i.e.\ a unital $^*$-antihomomorphism determined by the condition $S(u_{ij})=u_{ji}$, $i,j=1,\ldots,n$. These maps descend to $C(\widetilde{S}_N) = (\widetilde{A}_s(N))_{class}$, and correspond there, via Gelfand duality, to the unit permutation $id\in\widetilde{S}_N$, and to the usual inversion map, which to $\sigma:X\simeq Y$ associates the inverse bijection $\sigma^{-1}:Y\simeq X$. Note that the ``subantipode'' restricted to the unital $^*$-bialgebra $\widetilde{\mathcal{A}}_s(N)$ satisfies the following weakening of the usual antipode axiom:
\begin{equation}\label{subant} m_3 (S \otimes id \otimes S) \Delta_{2} = S,\end{equation}
where $m_3$ is the triple multiplication and $\Delta_2 = (\Delta \otimes id) \Delta$. Note that the standard axiom for an antipode of a Hopf algebra implies the relation \eqref{subant}.

Summarizing, Proposition 1.7, in conjunction with the formula \eqref{subant} should be regarded as just a first attempt of axiomatizing the structure of $\widetilde{S}_N^+$, which seems to be a quite special quantum semigroup.

As a conclusion to the considerations in this section, let us rephrase the last proposition diagrammatically.
We have maps as follows
$$\begin{matrix}
\widetilde{A}_s(N)&\to&A_s(N)\\
\\
\downarrow&&\downarrow\\
\\
C(\widetilde{S}_N)&\to&C(S_N)
\end{matrix}
\quad \quad \quad::\quad \quad\quad
\begin{matrix}
\widetilde{S}_N^+&\supset&S_N^+\\
\\
\cup&&\cup\\
\\
\widetilde{S}_N&\supset&S_N
\end{matrix}$$
with the bialgebras at left corresponding to the quantum semigroups at right.

Finally observe that $S_N$ is the biggest subgroup of $\widetilde{S}_N$, with the respect to the usual inversion operation $\sigma\to\sigma^{-1}$. The same holds in the quantum setting, with $S_N^+$ being the maximal quantum subgroup of $\widetilde{S}_N^+$, with respect to the subantipode map $S(u_{ij})=S(u_{ji})$.

It is not clear how exactly the general theory of quantum partial permutations can be further developed. One key problem is that of the general study of the `subantipode' axiom, \eqref{subant}. Another problem is that of understanding the precise structure of $\widetilde{A}_s(N)$ at small values of $N$, and particularly at $N=3$.

\section{The bialgebra image construction}

In this section we develop the bialgebra image construction for the partial Hadamard matrices, by following the work on the Hopf images in \cite{bbi}, \cite{bni}. Begin with the following result.

\begin{proposition}
Let $H\in M_{M\times N}(\mathbb T)$ be partial Hadamard, with rows $R_1,\ldots,R_M\in\mathbb T^N$ and let $(u_{ij})_{i,j=1}^M$ be the canonical submagic unitary matrix of generators of $\widetilde{A}_s(M)$. Then $u_{ij}\to Proj(R_i/R_j)$ defines a representation $\pi_H:\widetilde{A}_s(M)\to M_N(\mathbb C)$.
\end{proposition}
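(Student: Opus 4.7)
The plan is to invoke the universal property of $\widetilde{A}_s(M)$ directly. By Definition 1.3, this algebra is universal for submagic matrices of size $M$, so to exhibit the $^*$-homomorphism $\pi_H:\widetilde{A}_s(M)\to M_N(\mathbb C)$ it is enough to verify that the matrix $P=(P_{ij})_{i,j=1}^M$ with $P_{ij}=\mathrm{Proj}(R_i/R_j)\in M_N(\mathbb C)$ is a submagic matrix. This immediately produces a unique unital $^*$-homomorphism sending $u_{ij}$ to $P_{ij}$, which is the required $\pi_H$.

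So the work reduces to checking two things about the $P_{ij}$. First, each $P_{ij}$ is an orthogonal projection: this is automatic, since $\xi_{ij}=R_i/R_j\in\mathbb T^N$ is a nonzero vector (its components are unit modulus), and the rank-one orthogonal projection onto its span is by construction self-adjoint and idempotent. Second, the projections must be pairwise orthogonal along every row and every column of the matrix.

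For the row/column orthogonality I would simply invoke the computation performed at the very beginning of Section 1, which shows that $\langle \xi_{ij},\xi_{ik}\rangle$ is proportional to $\langle R_k,R_j\rangle$ and hence vanishes for $j\neq k$, and symmetrically $\langle \xi_{ij},\xi_{kj}\rangle=0$ for $i\neq k$. Since orthogonality of nonzero vectors in $\mathbb C^N$ translates into orthogonality of the rank-one projections onto their spans (for nonzero $v\perp w$ one has $\mathrm{Proj}(v)\mathrm{Proj}(w)=0$), this yields $P_{ij}P_{ik}=0$ whenever $j\neq k$ and $P_{ij}P_{kj}=0$ whenever $i\neq k$. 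Thus $P$ is submagic in $M_N(\mathbb C)$.

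There is really no genuine obstacle here: the partial Hadamard condition was designed precisely so that the vectors $\xi_{ij}=R_i/R_j$ are mutually orthogonal along both indices, and the universal property of $\widetilde{A}_s(M)$ then delivers $\pi_H$ for free. The only ``care'' needed is the observation that passing from orthogonality of vectors to orthogonality (in the sense of $PQ=0$) of the associated rank-one projections is immediate, and that the vectors $\xi_{ij}$ are nonzero, which is guaranteed by $H\in M_{M\times N}(\mathbb T)$.
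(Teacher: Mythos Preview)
Your proof is correct and follows precisely the paper's own argument: verify that the matrix $P=(P_{ij})$ with $P_{ij}=\mathrm{Proj}(R_i/R_j)$ is submagic by invoking the orthogonality computation from the beginning of Section~1, and then apply the universal property of $\widetilde{A}_s(M)$. The paper's proof is simply a more compressed version of what you wrote.
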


\begin{proof}
As explained at the beginning of section 1, the $M\times M$ array of vectors $\xi_{ij}=R_i/R_j\in\mathbb T^N$ has the property that each of its rows and columns consists of pairwise orthogonal vectors. But this means that the corresponding matrix of rank one projections $P_{ij}=Proj(\xi_{ij})$ is submagic in the sense of Definition 1.3, and we are done.
\end{proof}

We use now the fact that the notion of Hopf image, axiomatized in \cite{bbi}, extends in a straightforward way to the quantum semigroup setting (we call it the bialgebra image here, as we do not a Hopf algebra structure). Instead of developing again the general theory here let us just record the construction that we are interested in. Minimality in the next formulation is understood in universal terms, i.e.\ as the existence of unique canonical maps for any other factorisation (see \cite{bbi} for details).

\begin{proposition} \label{factHad}
There exists a minimal unital $^*$-algebra with comultiplication $\mathcal{A}$ producing a factorisation of type
$$\pi_H:\widetilde{\mathcal{A}}_s(M)\to A\to M_N(\mathbb C)$$
with the map on the left being a morphism of unital $^*$-algebras with comultiplications. Slightly abusing the language we write $\mathcal{A}=C(G(H))$, and call $G(H)\subset\widetilde{S}_M^+$ the quantum semigroup associated to $H$.
\end{proposition}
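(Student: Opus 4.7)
The plan is to adapt the Hopf image construction from \cite{bbi} to our $*$-bialgebra setting. Since $\widetilde{\mathcal{A}}_s(M)$ carries only a subantipode, we cannot quotient by a Hopf ideal; instead, we shall construct $\mathcal{A}$ as the quotient of $\widetilde{\mathcal{A}}_s(M)$ by the largest $*$-bi-ideal contained in $\ker \pi_H$. Throughout, ``$*$-bi-ideal'' means a two-sided $*$-ideal $J'$ satisfying the coideal conditions $\Delta(J') \subset J' \otimes \widetilde{\mathcal{A}}_s(M) + \widetilde{\mathcal{A}}_s(M) \otimes J'$ and $\epsilon(J') = 0$.

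Concretely, let $I = \ker \pi_H$ and consider the family $\mathcal{F}$ of all $*$-bi-ideals of $\widetilde{\mathcal{A}}_s(M)$ contained in $I$; this family is nonempty since $\{0\} \in \mathcal{F}$. Set $J = \sum_{J' \in \mathcal{F}} J'$. The verification that $J \in \mathcal{F}$ is routine: $J$ is clearly a $*$-ideal contained in $I$ with $\epsilon(J) = 0$, and the coideal condition is preserved under sums, since any element $x \in J$ is a finite sum of elements $x_\alpha \in J'_\alpha$ with $\Delta(x_\alpha) \in J'_\alpha \otimes \widetilde{\mathcal{A}}_s(M) + \widetilde{\mathcal{A}}_s(M) \otimes J'_\alpha \subset J \otimes \widetilde{\mathcal{A}}_s(M) + \widetilde{\mathcal{A}}_s(M) \otimes J$. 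Thus $J$ is the largest element of $\mathcal{F}$. Define $\mathcal{A} = \widetilde{\mathcal{A}}_s(M)/J$; standard bialgebra theory shows $\mathcal{A}$ inherits a unital $*$-bialgebra structure, the projection $\widetilde{\mathcal{A}}_s(M) \to \mathcal{A}$ is a morphism of $*$-bialgebras, and the inclusion $J \subset \ker \pi_H$ forces $\pi_H$ to factor through $\mathcal{A}$, yielding the required diagram.

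For minimality, suppose we are given another factorization $\widetilde{\mathcal{A}}_s(M) \xrightarrow{\phi} \mathcal{A}' \xrightarrow{\psi} M_N(\mathbb{C})$ with $\phi$ a surjective morphism of unital $*$-bialgebras. Then $\ker \phi$ is a $*$-bi-ideal contained in $\ker \pi_H$, hence $\ker \phi \subset J$ by maximality, so $\phi$ descends uniquely to a surjective $*$-bialgebra morphism $\mathcal{A} \to \mathcal{A}'$ compatible with the factorizations; uniqueness of this canonical map is automatic, since both sides are generated as $*$-algebras by the images of the $u_{ij}$. Honestly, I do not expect any serious obstacle in this proof: the argument is essentially formal manipulation with ideals and coideals. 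The only conceptual point worth emphasizing is that the absence of a genuine antipode on $\widetilde{\mathcal{A}}_s(M)$ is invisible at this level — the construction succeeds as long as one is content to remain in the category of $*$-bialgebras rather than Hopf $*$-algebras, which is exactly why the authors speak of a \emph{bialgebra image} rather than a Hopf image.
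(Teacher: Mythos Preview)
Your approach is exactly the paper's: quotient $\widetilde{\mathcal{A}}_s(M)$ by the sum of all comultiplication-stable $*$-ideals contained in $\ker\pi_H$. The construction of $J$ and the verification that it is again a bi-ideal are fine.

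There is, however, a genuine slip in your minimality paragraph. From $\ker\phi\subset J$ you conclude that ``$\phi$ descends uniquely to a surjective $*$-bialgebra morphism $\mathcal{A}\to\mathcal{A}'$''. This is backwards: for $\phi:\widetilde{\mathcal{A}}_s(M)\to\mathcal{A}'$ to descend to a map out of $\mathcal{A}=\widetilde{\mathcal{A}}_s(M)/J$ you would need $J\subset\ker\phi$, which is the opposite inclusion. What $\ker\phi\subset J$ actually gives is that the \emph{projection} $\widetilde{\mathcal{A}}_s(M)\to\mathcal{A}$ factors through $\mathcal{A}'\cong\widetilde{\mathcal{A}}_s(M)/\ker\phi$, producing a surjection $\mathcal{A}'\to\mathcal{A}$. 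That is the correct direction for the universal property: $\mathcal{A}$ is the \emph{smallest} bialgebra quotient through which $\pi_H$ factors (largest kernel), so every competing $\mathcal{A}'$ maps onto it, not the reverse. With the arrow flipped your argument is complete and coincides with the paper's sketch.

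A minor remark: the paper only asks that the left map respect the comultiplication, so your added counit condition $\epsilon(J')=0$ is not strictly required by the statement, though it is harmless and ensures $\mathcal{A}$ is a genuine bialgebra.
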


\begin{proof}
As already mentioned, this follows by reasoning as in \cite{bbi}. Indeed, the existence of $\mathcal{A}$, having the universal property in the statement, can be shown by dividing $\widetilde{\mathcal{A}}_s(M)$ by the sum of all the comultiplication-stable ideals contained in $\ker\pi_H$. See \cite{bbi}.
\end{proof}

Note that the construction above naturally works for any unital $^*$-homomorphism from $\widetilde{\mathcal{A}}_s(M)$ to a unital $^*$-algebra, not necessarily arising from a partial Hadamard matrix.
As a first example of the construction in Proposition \ref{factHad}, we have the following fact.

\begin{proposition}
Assume that $H\in M_{M\times N}(\mathbb T)$ is such that the projections $P_{ij}$ commute and let $\mathcal{A}\subset M_N(\mathbb C)$ be the algebra generated by these projections. Write $\mathcal{A}=C(S)$ with $S\subset\widetilde{S}_M^+$, as in Proposition 1.6. Then $G=<S>$ is the semigroup generated by $S$.
\end{proposition}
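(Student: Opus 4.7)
The plan is to prove $G=\langle S\rangle$ by establishing both inclusions, exploiting the commutativity of the setting to force $G$ to be classical.

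For the inclusion $G\subset\langle S\rangle$, the commutativity of the projections $P_{ij}$ makes the image of $\pi_H$ coincide with the commutative algebra $\mathcal{A}=C(S)$, so by Proposition 1.7(4) the map $\pi_H$ factors through the classical quotient $\widetilde{\mathcal{A}}_s(M)\to C(\widetilde{S}_M)$. Since $\langle S\rangle\subset\widetilde{S}_M$ is a subsemigroup containing $S$, the restriction map $C(\widetilde{S}_M)\to C(\langle S\rangle)$ is a surjective morphism of bialgebras, and it further surjects onto $C(S)$, giving an intermediate bialgebra factorization
$$\widetilde{\mathcal{A}}_s(M)\to C(\langle S\rangle)\to C(S)\hookrightarrow M_N(\mathbb{C}).$$
The universal property of Proposition 2.2 (minimality of $C(G)$ among bialgebra quotients of $\widetilde{\mathcal{A}}_s(M)$ through which $\pi_H$ factors) then yields a surjection $C(\langle S\rangle)\to C(G)$, which dually reads $G\subset\langle S\rangle$.

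For the reverse inclusion, observe that $C(G)$, being a quotient of the commutative algebra $C(\langle S\rangle)$, is automatically commutative; hence $G$ is a classical subsemigroup of $\widetilde{S}_M$. The surjection $C(G)\to C(S)$ inherent in the defining factorization of $G$ then corresponds, via Gelfand duality together with Proposition 1.6, to an inclusion $S\subset G$ of subsets of $\widetilde{S}_M$. Closedness of $G$ under the semigroup operation yields $\langle S\rangle\subset G$, and combined with the previous step this forces $G=\langle S\rangle$.

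There is no serious obstacle in this argument: everything follows from the universal property of Proposition 2.2, together with the structural observation that in the commutative case $C(G)$ is automatically commutative, sparing us any general analysis of when a bialgebra image happens to be classical. The only routine check is that $C(\widetilde{S}_M)\to C(\langle S\rangle)$ respects the coproducts, which is immediate from $\langle S\rangle$ being a subsemigroup of $\widetilde{S}_M$.
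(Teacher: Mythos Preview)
Your proof is correct and follows essentially the same approach as the paper's. The paper's proof is a single sentence (``$\pi_H$ factorizes through $C(\langle S\rangle)$, and this factorization is minimal''), and your argument simply unpacks both halves of that sentence: the first inclusion verifies the factorization through $C(\langle S\rangle)$ via the abelianization, and the second inclusion verifies minimality by showing that any commutative bialgebra quotient through which $\pi_H$ factors must contain $S$ in its spectrum and hence surject onto $C(\langle S\rangle)$.
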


\begin{proof}
This is clear from definitions, because our representation $\pi_H:\widetilde{A}_s(M)\to C(S)$ factorizes through $C(<S>)$, and this factorization is minimal.
\end{proof}

Let us investigate now the tensor product problem. Given two partial Hadamard matrices $H\in M_{m\times n}(\mathbb T)$, $K\in M_{M\times N}(\mathbb T)$ we let $H\otimes K\in M_{mM\times nN}(\mathbb T)$ be the matrix given by $(H\otimes K)_{ia,jb}=H_{ij}K_{ab}$. This matrix is then partial Hadamard, and we have the following equality.

\begin{proposition}
$G(H\otimes K)=G(H)\times G(K)$.
\end{proposition}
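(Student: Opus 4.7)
The plan is to mimic the square-Hadamard tensor product proof from \cite{bbi}, \cite{bni}, using the submagic-matrix universal property in place of the magic one.

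First, I would identify the submagic matrix attached to $H\otimes K$. The row $R^{H\otimes K}_{(i,a)}$ equals $R_i^H\otimes R_a^K\in\mathbb T^{nN}$, hence
$$\xi^{H\otimes K}_{(i,a),(j,b)}=\xi_{ij}^H\otimes\xi_{ab}^K,\qquad P^{H\otimes K}_{(i,a),(j,b)}=P_{ij}^H\otimes P_{ab}^K\in M_n(\mathbb C)\otimes M_N(\mathbb C).$$
Next, I would check that in $\widetilde{\mathcal A}_s(m)\otimes\widetilde{\mathcal A}_s(M)$ the matrix $U_{(i,a),(j,b)}:=u_{ij}\otimes u_{ab}$ is itself submagic: each $U_{(i,a),(j,b)}$ is a projection (tensor of projections), and the row/column orthogonality reduces to the corresponding property in either factor. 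By the universal property of $\widetilde{\mathcal A}_s(mM)$ this yields a unital $^*$-homomorphism $\Phi:\widetilde{\mathcal A}_s(mM)\to\widetilde{\mathcal A}_s(m)\otimes\widetilde{\mathcal A}_s(M)$ with $\Phi(u_{(i,a),(j,b)})=u_{ij}\otimes u_{ab}$, and a direct check on generators shows that $\Phi$ intertwines $\Delta$ and $\epsilon$, i.e.\ is a bialgebra morphism.

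Composing $\Phi$ with $\pi_H\otimes\pi_K$ recovers $\pi_{H\otimes K}$ by the first display. Since by Proposition~2.2 each $\pi_H,\pi_K$ factors through the respective bialgebra images via bialgebra quotients $\widetilde{\mathcal A}_s(m)\twoheadrightarrow C(G(H))$ and $\widetilde{\mathcal A}_s(M)\twoheadrightarrow C(G(K))$, we obtain a factorization
$$\pi_{H\otimes K}:\widetilde{\mathcal A}_s(mM)\longrightarrow C(G(H))\otimes C(G(K))=C(G(H)\times G(K))\longrightarrow M_{nN}(\mathbb C),$$
with the left-hand arrow a bialgebra morphism. Minimality of the bialgebra image (Proposition~2.2) then gives a canonical surjective bialgebra morphism $C(G(H)\times G(K))\twoheadrightarrow C(G(H\otimes K))$, which is one of the two inclusions $G(H\otimes K)\subset G(H)\times G(K)$.

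For the reverse inclusion — identifying this canonical surjection as an isomorphism — I would argue that the bialgebra image construction is multiplicative with respect to tensor products of bialgebra representations: if $\rho_i:\mathcal B_i\to B(\mathsf H_i)$ are representations of bialgebras with bialgebra images $\mathcal B_i^{min}$, then the bialgebra image of $\rho_1\otimes\rho_2$ is $\mathcal B_1^{min}\otimes\mathcal B_2^{min}$. Concretely, any bialgebra-stable ideal in $\widetilde{\mathcal A}_s(m)\otimes\widetilde{\mathcal A}_s(M)$ contained in $\ker(\pi_H\otimes\pi_K)$ lies in the sum of $(\ker\pi_H)^{\max}\otimes\widetilde{\mathcal A}_s(M)$ and $\widetilde{\mathcal A}_s(m)\otimes(\ker\pi_K)^{\max}$; pushing this back through $\Phi$ identifies the largest bialgebra-stable ideal in $\ker\pi_{H\otimes K}$ with the preimage under $\Phi$ of the corresponding ideal defining $C(G(H))\otimes C(G(K))$. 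This step is the main obstacle, because unlike in the Hopf image setting of \cite{bbi} we have no antipode to decouple the tensor factors; the argument has to be carried out at the level of comultiplication-stable ideals, using that the image of $\Phi$ generates $C(G(H))\otimes C(G(K))$ as a bialgebra after passing to the quotients. Granted this multiplicativity, both inclusions follow and we conclude $G(H\otimes K)=G(H)\times G(K)$.
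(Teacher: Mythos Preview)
Your approach matches the paper's: compute $P^{H\otimes K}_{(i,a),(j,b)}=P^H_{ij}\otimes P^K_{ab}$, build the bialgebra map $\Phi:\widetilde{\mathcal A}_s(mM)\to\widetilde{\mathcal A}_s(m)\otimes\widetilde{\mathcal A}_s(M)$ via $u_{(i,a),(j,b)}\mapsto u_{ij}\otimes u_{ab}$, and factor $\pi_{H\otimes K}$ through $A(H)\otimes A(K)$ using the resulting commutative diagram. The paper is in fact sketchier than you are---it explicitly defers to \cite{bni} and stops at ``the factorization associated to $H\otimes K$ factorizes indeed through $A(H)\otimes A(K)$, and this gives the result,'' without isolating the reverse inclusion at all; your concern about multiplicativity of the bialgebra-image construction is legitimate, and the cleanest way to settle it (which does not require an antipode) is the iterated-representation description $\ker\bigl(\widetilde{\mathcal A}_s\twoheadrightarrow C(G)\bigr)=\bigcap_n\ker\bigl(\pi^{\otimes n}\Delta^{(n-1)}\bigr)$, under which $\pi_{H\otimes K}^{(n)}$ identifies, after reshuffling tensor factors, with $(\pi_H^{(n)}\otimes\pi_K^{(n)})\circ\Phi$.
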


\begin{proof}
This follows as in \cite{bni}, so we just sketch main arguments. Indeed, let us identify $\mathbb C^{nN}=\mathbb C^n\otimes\mathbb C^N$, by using the lexicographic order on the double indices $jb$. Since the $ia$-th row of $H\otimes K$ is the vector $(H_{ij}\otimes K_{ab})_{jb}$, the corresponding submagic basis is $\xi_{ia,jb}=\xi_{ij}\otimes\xi_{ab}$, and so the corresponding submagic matrix is $P_{ia,jb}=P_{ij}\otimes P_{ab}$. Here, and in what follows, we agree to use the common letter $P$ for the submagic matrices associated to $H,K,H\otimes K$, with the indices telling us which of these $P$ matrices we are exactly talking about.

Consider now the following diagram of algebras:
$$\begin{matrix}
\widetilde{A}_s(m)\otimes \widetilde{A}_s(M)&\to& C(G(H))\otimes C(G(K))&\to& M_n(\mathbb
C)\otimes M_N(\mathbb C)\\
\uparrow &&&&\downarrow\\
\widetilde{A}_s(mM)&\to&C(G(H\otimes K))&\to&M_{nN}(\mathbb C)
\end{matrix}$$

Here the arrow at right comes from $\mathbb C^{nN}=\mathbb C^n\otimes\mathbb C^N$, and the arrow at left is given by $u_{ia,jb}\to u_{ij}\otimes u_{ab}$. At the level of generators, we have the following diagram:
$$\begin{matrix}
u_{ij}\otimes u_{ab}&\to&w_{ij}\otimes w_{ab}&\to& P_{ij}\otimes
P_{ab}\\
\uparrow &&&&\downarrow\\
u_{ia,jb}&\to&w_{ia,jb}&\to&P_{ia,jb}
\end{matrix}$$

Observe now that this diagram commutes. We conclude that the factorization associated to $H\otimes K$ factorizes indeed through $C(G(H))\otimes C(G(K))$, and this gives the result.
\end{proof}

Consider now again the commutative situation.

\begin{definition}
A pre-Latin square is a matrix $L\in M_M(1,\ldots,N)$ having the property that its entries are distinct, on each row and each column.
\end{definition}

Observe that we must have $M\leq N$, and that the usual Latin squares are precisely the pre-Latin squares at $M=N$. Now assume that we are given such a pre-Latin square $L$, and an orthogonal basis $\xi=(\xi_1,\ldots,\xi_N)$ of $\mathbb C^N$. With this data in hand, we can construct a submagic matrix $P\in M_M(M_N(\mathbb C))$, according to the formula $P_{ij}=Proj(\xi_{L_{ij}})$.

On the other hand, given $L$ as above, to any $x\in\{1,\ldots,N\}$ we can associate the partial permutation $\sigma_x\in\widetilde{S}_M$ given by $\sigma_x(j)=i\iff L_{ij}=x$. We denote by $G\subset\widetilde{S}_M$ the semigroup generated by $\sigma_1,\ldots,\sigma_N$, and call it the semigroup associated to $L$.

With these notations, we have the following result. The interest in submagic matrices built of projections of rank-one is clearly motivated by the case of partial Hadamard matrices, see also Problem 2.7 below.

\begin{theorem}
For a submagic matrix $P\in M_M(M_N(\mathbb C))$ formed of rank one projections, the following are equivalent:
\begin{enumerate}
\item The projections $P_{ij}$ pairwise commute.

\item $P$ comes from a pre-Latin square $L$.
\end{enumerate}
In addition, if so is the case, the bialgebra image of $\pi_P:\widetilde{A}_s(M)\to M_N(\mathbb C)$ is the algebra $C(G)$, where $G\subset\widetilde{S}_M$ is the semigroup associated to $L$.
\end{theorem}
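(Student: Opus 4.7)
The plan is to handle the two implications of the equivalence directly, and then deduce the bialgebra image claim by combining Proposition 1.6 with Proposition 2.3.

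For $(2)\Rightarrow(1)$, if $P_{ij}=Proj(\xi_{L_{ij}})$ with $\xi_1,\ldots,\xi_N$ an orthogonal basis, all the $P_{ij}$ are simultaneously diagonal in this basis, hence commute.

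For $(1)\Rightarrow(2)$, the key elementary fact is that two rank-one projections which commute are either equal or orthogonal. So the family $\{\operatorname{Im} P_{ij}\}_{i,j}$ consists of pairwise orthogonal lines in $\mathbb{C}^N$. Pick a unit vector in each such line and extend to an orthonormal basis $\xi_1,\ldots,\xi_N$ of $\mathbb{C}^N$, and define $L_{ij}\in\{1,\ldots,N\}$ to be the unique index with $P_{ij}=Proj(\xi_{L_{ij}})$. To verify that $L$ is a pre-Latin square, suppose $L_{ij}=L_{ij'}$ with $j\neq j'$; then $P_{ij}=P_{ij'}$, but the row-orthogonality from the submagic condition gives $P_{ij}P_{ij'}=0$, whence $P_{ij}=P_{ij}^2=0$, contradicting that $P_{ij}$ has rank one. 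The column case is identical.

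For the bialgebra image, assume $(1)$ and $(2)$ and apply Proposition 2.3: letting $\mathcal{A}\subset M_N(\mathbb{C})$ be the unital $^*$-algebra generated by the $P_{ij}$, which is commutative and finite-dimensional, we have $\mathcal{A}=C(S)$ for some $S\subset\widetilde{S}_M$ via Proposition 1.6, and the bialgebra image is $C(\langle S\rangle)$. It remains to identify $S$. Writing $V=\{L_{ij}:1\le i,j\le M\}$, the minimal projections of $\mathcal{A}$ are the rank-one projections $Proj(\xi_x)$ for $x\in V$, together possibly with the projection onto the orthogonal complement of $\operatorname{span}\{\xi_x\}_{x\in V}$ when this space is nontrivial. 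Under the identification of Proposition 1.6, the character $\chi_x$ associated to $Proj(\xi_x)$ satisfies $\chi_x(P_{ij})=1\iff L_{ij}=x\iff\sigma_x(j)=i$, and so $\chi_x$ corresponds to the partial permutation $\sigma_x$; any extra character corresponds to the empty partial permutation. Hence $S\subseteq\{\sigma_1,\ldots,\sigma_N\}\cup\{\emptyset\}$, and since the empty partial permutation is absorbing (so it neither generates nor removes anything from $\langle\cdot\rangle$ beyond being present), we obtain $\langle S\rangle=\langle\sigma_1,\ldots,\sigma_N\rangle=G$, as desired.

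The step I expect to require the most care is the last one, namely correctly matching the Gelfand spectrum of $\mathcal{A}$ with the set of partial permutations $\{\sigma_x\}$ under the conventions of Proposition 1.6, and in particular keeping track of the edge case where $L$ does not take all values in $\{1,\ldots,N\}$ (so that a trivial character appears). The equivalence $(1)\Leftrightarrow(2)$ itself reduces to the commuting-rank-one dichotomy and is mainly bookkeeping.
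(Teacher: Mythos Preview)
Your proof of the equivalence $(1)\Leftrightarrow(2)$ is correct and matches the paper's argument essentially verbatim: both rest on the dichotomy that commuting rank-one projections are equal or orthogonal, followed by extending the resulting orthogonal family of lines to a basis and reading off the pre-Latin square from the submagic condition.

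For the bialgebra image your route differs slightly from the paper's. You invoke Proposition~2.3 and then compute the Gelfand spectrum $S$ of $\mathcal{A}=\langle P_{ij}\rangle$ by listing its minimal projections. The paper instead enlarges the target, embedding $\operatorname{Im}(\pi_P)$ into $C(\{1,\ldots,N\})$ via $P_{ij}\mapsto\delta_{L_{ij}}$, and then reads off the Gelfand dual of the composite $C(\widetilde{S}_M)\to C(\{1,\ldots,N\})$ directly as the map $x\mapsto\sigma_x$. The paper's detour through the bigger algebra is what lets it avoid your edge-case analysis of whether $L$ hits all values. Two points to tighten in your version: first, Proposition~2.3 is stated only for $P$ coming from a partial Hadamard matrix $H$, so you should remark that its one-line proof uses nothing about $H$ and applies to any commuting submagic $P$; second, your conclusion ``$S\subseteq\{\sigma_1,\ldots,\sigma_N\}\cup\{\emptyset\}$, hence $\langle S\rangle=G$'' is not quite an argument as written, since a mere inclusion of generating sets gives only $\langle S\rangle\subseteq G$. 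The clean fix is to observe that for $x\notin V$ one has $\sigma_x=\emptyset$, so in fact $\{\sigma_1,\ldots,\sigma_N\}=\{\sigma_x:x\in V\}\cup\{\emptyset\}$ whenever $V\subsetneq\{1,\ldots,N\}$, and hence $S=\{\sigma_1,\ldots,\sigma_N\}$ as sets in all cases; the ``absorbing'' remark is then unnecessary.
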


\begin{proof}
$(2)\implies(1)$ is clear. For $(1)\implies(2)$, we use the fact that two rank one projections commute when their images are equal, or orthogonal. Thus, if we denote by $X$ the set formed by the images of the projections $P_{ij}$, and pick for any $E\in X$ a nonzero vector $\xi_E\in E$, the system $\{\xi_E|E\in X\}$ is orthogonal. Now by enlarging $X$ if needed, we can assume that $|X|=N$, and that $\{\xi_E|E\in X\}$ is an orthogonal basis of $\mathbb C^N$.

Let us label now $1,\ldots,N$ the elements of $X$. Then our orthogonal basis  of $\mathbb C^N$ is now written $\xi_1,\ldots,\xi_N$, and for any $i,j\in\{1,\ldots,M\}$ we can write $P_{ij}=Proj(\xi_{L_{ij}})$, with $L_{ij}\in\{1,\ldots,N\}$. But, with this notation in hand, the submagic condition on $P$ translates into the fact that $L$ is a pre-Latin square, and this finishes the proof of $(1)\implies(2)$.

For the last assertion now, observe first that, since the entries of $P$ commute, the representation $\pi_P$ factorizes through $C(\widetilde{S}_M)$. On the other hand, we have as well an embedding $Im(\pi_P)\subset C(1,\ldots,N)$, given by $P_{ij}\to\delta_{L_{ij}}$. Thus, we are left with computing the bialgebra image of the map $\varphi:C(\widetilde{S}_M)\to C(1,\ldots,N)$ given by $\varphi(u_{ij})=\delta_{L_{ij}}$.

In order to solve this latter problem, we can use Gelfand duality. Indeed, $\varphi$ must come from a map $\sigma:\{1,\ldots,N\}\to\widetilde{S}_M$, according to the duality formula $\varphi(f)(x)=f(\sigma_x)$. But, with $f=u_{ij}$, this formula tells us that we must have $\delta_{L_{ij}}(x)=u_{ij}(\sigma_x)$, and so that $\sigma_x\in\widetilde{S}_M$ must be given by $\sigma_x(j)=i\iff L_{ij}=x$. Now since the bialgebra image that we are computing is $C(G)$, with $G=<x_1,\ldots,x_N>$, this gives the result.
\end{proof}

The above result raises the following question.

\begin{problem}
What are the partial Hadamard matrices $H\in M_{M\times N}(\mathbb T)$ having the property that the projections $P_{ij}=Proj(R_i/R_j)$ commute (equivalently, $P=(P_{ij})$ comes from a pre-Latin square)?
\end{problem}

In the square case, $M=N$, the only solutions are the generalized Fourier matrices, $F_G=F_{N_1}\otimes\ldots\otimes F_{N_k}$ with $G=\mathbb Z_{N_1}\otimes\ldots\otimes\mathbb Z_{N_k}$ finite abelian group. See \cite{bni}.

In the purely rectangular case, $M<N$, the situation is much more complicated. As an example, at $M=2$ any matrix of type $H=\begin{pmatrix}1&\ldots&1\\ a_1&\ldots&a_N\end{pmatrix}$ is a solution, provided that the numbers $a_1,\ldots,a_N\in\mathbb T$ satisfy the equations $\sum_ia_i=\sum_ia_i^2=0$.

\section{Completion problems}

Given a partial Hadamard matrix $H\in M_{M\times N}(\mathbb T)$, one interesting problem is that of deciding whether this matrix extends or not to a $N\times N$ complex Hadamard matrix.

In the real case, i.e.\ $H\in M_{M\times N}(\pm1)$, the existence of the completion is known to be automatic at any $M=N-K$, with $K\leq 7$. See \cite{hal}, \cite{ver}, and for more recent analysis of related completion problems \cite{cflw}.

In the general complex case, however, very little seems to be known about this question. Perhaps most illustrating here is the completion problem for matrices $H\in M_{4\times 5}(\mathbb T)$. Here we state the following conjecture.

\begin{conjecture}
Every partial Hadamard matrix in $H\in M_{4\times 5}(\mathbb T)$ completes to a Hadamard matrix in $M_{5\times 5} (\mathbb T)$.
\end{conjecture}

Proving the above statement  would probably require generalizing  Haagerup's result in \cite{haa}, which is likely to be a difficult task. Let us record, however, a related  linear algebra result. Let $H\in M_{(N-1)\times N}(\mathbb T)$, and denote by $R_1,\ldots,R_{N-1}\in\mathbb T^N$ its rows, and by $C_1,\ldots,C_N\in\mathbb T^{N-1}$ its columns.

\begin{proposition}
For $H\in M_{(N-1)\times N}(\mathbb T)$ partial Hadamard, the following are equivalent:
\begin{enumerate}
\item $H$ is completable to a $N\times N$ complex Hadamard matrix.

\item $|\det H^{(j)}|$ is independent from $j$, where $H^{(j)}$ is obtained from $H$ by removing $C_j$.

\item $|\det H^{(j)}|=N^{N/2-1}$ for any $i$, where $H^{(j)}$ is as above.
\end{enumerate}
If any/all of the above conditions hold, the completion is obtained by setting $H_{Nj} = (-1)^{j}N^{1-N/2} \overline{\det H^{(j)}}$.
\end{proposition}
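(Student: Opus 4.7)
The plan is to view completion as a problem of finding a unimodular vector in a $1$-dimensional complex line, and then to pin that line down explicitly by a cofactor identity.

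Completing $H$ amounts to adjoining a row $R_N=(H_{N1},\ldots,H_{NN})\in\mathbb{T}^N$ such that $\langle R_N,R_i\rangle=0$ for every $i\leq N-1$; orthogonality of $R_1,\ldots,R_{N-1}$ and the normalization $\|R_i\|^2=N$ are automatic, and $\|R_N\|^2=N$ as soon as $R_N\in\mathbb{T}^N$. Since the $R_i$'s are linearly independent (being nonzero and orthogonal), these $N-1$ linear conditions cut out a $1$-dimensional subspace $L\subset\mathbb{C}^N$. To identify $L$ I would apply cofactor expansion along the top row of the $N\times N$ matrix obtained by stacking $R_i$ on top of $R_1,\ldots,R_{N-1}$: this matrix has two equal rows, hence zero determinant, whence $\sum_j(-1)^{j+1}R_{ij}\det H^{(j)}=0$ for each $i\leq N-1$. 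Conjugating, the vector $v_j:=(-1)^{j+1}\overline{\det H^{(j)}}$ is orthogonal to every $R_i$, and is nonzero (else $H$ would fail to have full row rank), so $v$ spans $L$. Consequently $L$ meets $\mathbb{T}^N$ if and only if the $|v_j|$ all coincide, giving $(1)\Leftrightarrow(2)$; moreover any unimodular element of $L$ is of the form $\lambda v$ with $|\lambda|=1/|\det H^{(j)}|$, which already produces the stated completion formula up to a global phase of $\lambda$ and an overall sign on $R_N$ (the discrepancy between $(-1)^{j+1}$ and $(-1)^{j}$).

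For $(2)\Leftrightarrow(3)$ I would apply the Cauchy--Binet formula to $H$ and $H^*$:
$$\sum_{j=1}^{N}|\det H^{(j)}|^2\,=\,\det(HH^*)\,=\,\det(N\cdot I_{N-1})\,=\,N^{N-1}.$$
If (2) holds then each summand equals $N^{N-2}$, giving (3), and the converse is trivial. Substituting $|\det H^{(j)}|=N^{N/2-1}$ into $|\lambda|=1/|\det H^{(j)}|$ yields $|\lambda|=N^{1-N/2}$, and choosing $\lambda$ real positive delivers the formula in the last line of the proposition. The only genuine content is the cofactor identification of $L$; everything else is $2\times 2$ bookkeeping and the standard fact $HH^*=NI_{N-1}$, so I anticipate no real obstacle beyond tracking the sign convention consistently.
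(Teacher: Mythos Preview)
Your argument is correct and follows the same core idea as the paper: both identify the orthogonal complement of the rows as the line spanned by the cofactor vector $Z_j=(-1)^j\overline{\det H^{(j)}}$, so that completability is equivalent to the $|Z_j|$ being constant. The paper writes this verification as $\langle R_i,Z\rangle=\det H_i=0$ with $H_i$ the square matrix obtained by stacking $R_i$ on top of $H$, which is exactly your cofactor expansion of a matrix with a repeated row.

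The one genuine difference is in $(2)\Leftrightarrow(3)$. The paper argues indirectly: once $(2)$ holds, $(1)$ gives a completed Hadamard matrix $M$, and then $|\det M|=N^{N/2}$ (since $M/\sqrt{N}$ is unitary) forces each minor to have the stated value. Your Cauchy--Binet computation $\sum_j|\det H^{(j)}|^2=\det(HH^*)=N^{N-1}$ is more direct, as it does not pass through the existence of the completion and uses only the orthogonality already present in $H$. Both are short; yours has the small advantage of being self-contained at the level of the $(N-1)\times N$ matrix. Your remark about the sign discrepancy is also on point: the completion is determined only up to a global unimodular scalar on the last row, so $(-1)^j$ versus $(-1)^{j+1}$ is immaterial.
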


\begin{proof}
We use the well-known fact that $span(R_1,\ldots,R_{N-1})^\perp=\{\lambda Z: \lambda \in \mathbb C\}$, where $Z\in\mathbb C^N$ is the (clearly non-zero!) vector having components $Z_j=(-1)^j\overline{\det H^{(j)}}$. Indeed, if we denote by $H_i$ the square matrix obtained from $H$ by adding a first row equal to $R_i$, then:
$$<R_i,Z>=\sum_jH_{ij}\overline{Z}_j=\sum_j(-1)^jH_{ij}\det H^{(j)}=\det H_i=0$$

This gives $(1)\iff(2)$, as the existence of a completion is equivalent to the fact that $span(R_1,\ldots,R_{N-1})^\perp$ contains a vector in $\mathbb C^N$ with all entries of the absolute value $1$. Further $(3)\Longrightarrow(2)$ is obvious. Finally we show $(2)\Longrightarrow(3)$. Write $c=|\det H^{(j)}|$ and let  $M\in M_N(\mathbb T)$ be the complex Hadamard completing $H$. The proof above shows that the last row of $M$ must be given by $c^{-1} Z$. As we have $\frac{1}{\sqrt{N}}M\in U_N$, where $U_N$ denotes the set of unitary $N \times N$ matrices, $|\det M|=N^{N/2}$, simply because the absolute value of a determinant of a unitary matrix is equal to $1$. It remains to compute this determinant by the expansion with respect to the last row, which gives
\[ \det M = \sum_{j=1}^N c^{-1} (-1)^{N+j} (-1)^j\overline{\det H^{(j)}} \det H^{(j)} = (-1)^N c N.\]
This means that $c= N^{N/2-1}$ and shows also the last statement in the proposition.
\end{proof}

For more information on minors of Hadamard matrices we refer to the articles \cite{kms}, \cite{kms2}, see also \cite{bns}, where square submatrices of Hadamard matrices are considered.

Let us investigate now the completion problem for submagic matrices. We first study the case $M=N-1$.

\begin{proposition}
For a submagic matrix $P\in M_{N-1}(A)$, the following are equivalent:
\begin{enumerate}
\item $P$ is completable to a magic matrix $\widetilde{P}\in M_N(A)$.

\item The projections $P_{iN}=1-\sum_jP_{ij}$ are pairwise orthogonal.

\item The projections $P_{Nj}=1-\sum_iP_{ij}$ are pairwise orthogonal.

\item The element $P_{NN}=(\sum_{ij}P_{ij})-N+2$ is a projection.
\end{enumerate}
\end{proposition}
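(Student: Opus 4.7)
The plan is to first observe that any magic completion $\widetilde{P}$ of $P$ is forced to be unique, with its new entries given by
$$\widetilde{P}_{iN}=1-\sum_{j=1}^{N-1}P_{ij},\qquad \widetilde{P}_{Nj}=1-\sum_{i=1}^{N-1}P_{ij}$$
for $i,j\le N-1$, and $\widetilde{P}_{NN}=\sum_{i,j=1}^{N-1}P_{ij}-N+2$. Indeed, the row-$i$ sum condition ($i\le N-1$) determines $\widetilde{P}_{iN}$, the column-$j$ sum condition determines $\widetilde{P}_{Nj}$, and the remaining sum condition along the last row (or equivalently the last column) forces $\widetilde{P}_{NN}$ to equal the expression in (4), the two computations agreeing. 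Thus (1) is equivalent to the statement that these candidate entries actually assemble into a magic matrix.

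Next, I record the automatic features. Since the $P_{ij}$ along a row are pairwise orthogonal projections, $\sum_j P_{ij}$ is itself a projection, hence each $P_{iN}=1-\sum_j P_{ij}$ is a projection; symmetrically for each $P_{Nj}$. A direct expansion gives the crucial identities
$$P_{NN}=1-\sum_{i=1}^{N-1}P_{iN}=1-\sum_{j=1}^{N-1}P_{Nj},$$
so the row and column sum conditions on the last row and last column of $\widetilde{P}$ are built into the formulas and require no extra assumption.

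I will then invoke the standard fact that a finite sum of projections in a $C^*$-algebra is itself a projection if and only if the summands are pairwise orthogonal (the nontrivial direction follows from $p_i\le\sum_k p_k$, which yields $p_i p_j p_i=0$ for $i\ne j$ and hence $p_j p_i=0$ via positivity of $(p_j^{1/2}p_i)^*(p_j^{1/2}p_i)$). Combined with the identity above, this yields $(4)\iff(2)$ at once: $P_{NN}$ is a projection iff its complement $\sum_i P_{iN}$ is a projection, iff the $P_{iN}$ are pairwise orthogonal. By symmetry $(4)\iff(3)$.

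Finally, $(1)\Rightarrow(2),(3),(4)$ is just reading off the submagic relations from $\widetilde{P}$. For the converse, assuming any (equivalently all) of (2), (3), (4), the only magic conditions left to verify are the orthogonalities $P_{NN}P_{iN}=0$ and $P_{NN}P_{Nj}=0$; using $P_{NN}=1-\sum_k P_{kN}$ together with (2) one computes
$$P_{NN}P_{iN}=P_{iN}-\sum_{k=1}^{N-1}P_{kN}P_{iN}=P_{iN}-P_{iN}^2=0,$$
and analogously with (3) for the row-$N$ orthogonalities. The only real pitfall is the careful bookkeeping of which magic-matrix axioms are automatic and which require genuine input; once the key identity $P_{NN}=1-\sum_i P_{iN}$ is isolated, the entire argument collapses to this one projection-sum lemma.
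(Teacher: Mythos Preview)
Your proof is correct and follows essentially the same route as the paper's: both hinge on the identity $1-P_{NN}=\sum_i P_{iN}=\sum_j P_{Nj}$ together with the lemma that a finite sum of projections is a projection iff the summands are pairwise orthogonal, which immediately gives $(2)\iff(4)\iff(3)$, while $(1)\Rightarrow(2),(3)$ is forced by uniqueness of the completion and $(4)\Rightarrow(1)$ by filling in the candidate entries. Your version is simply more explicit---you spell out why $P_{iN},P_{Nj}$ are automatically projections, sketch the projection-sum lemma, and verify the $P_{NN}P_{iN}=0$ orthogonalities by hand---whereas the paper compresses all of this into three sentences.
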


\begin{proof}
Since a magic completion of $P$ must necessarily use the projections introduced in (2) and (3), we have $(1)\implies(2)$ and $(1)\implies(3)$. On the other hand, checking that some projections are pairwise orthogonal is the same as checking that their sum is a projection, and this gives $(2)\iff(4)$ and $(3)\iff(4)$, because the projection in question is $1-P_{NN}$. Finally, we have $(4)\implies(1)$, because we can complete $P$ with the projections in (2,3,4).
\end{proof}

In the general case now, we have the following observation.

\begin{proposition}
Assume that $K, M \in \mathbb N$, $P\in M_M(A)$ is submagic, and let $N=M+K$. If $P$ completes to a magic unitary $\widetilde{P}\in M_N(A)$ then $\sum_{i,j=1}^M P_{ij}\geq M-K$.
\end{proposition}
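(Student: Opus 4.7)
The plan is to exploit the row-sum and column-sum relations satisfied by the completion $\widetilde{P}$, together with the fact that any partial sum of pairwise orthogonal projections is dominated by $1$.

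Write $N = M+K$, and for $i,j \in \{1,\ldots,M\}$ let $P_{ij} = \widetilde{P}_{ij}$. First I would use the row condition: for each $i \in \{1,\ldots,M\}$ the magic relation $\sum_{j=1}^N \widetilde{P}_{ij} = 1$ gives
$$\sum_{j=M+1}^N \widetilde{P}_{ij} = 1 - \sum_{j=1}^M P_{ij}.$$
Summing over $i=1,\ldots,M$ and swapping the order of summation yields
$$\sum_{j=M+1}^N \sum_{i=1}^M \widetilde{P}_{ij} = M - \sum_{i,j=1}^M P_{ij}.$$

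Next I would bound the left-hand side using the column condition. For each fixed $j \in \{M+1,\ldots,N\}$, the entries $\widetilde{P}_{M+1,j},\ldots,\widetilde{P}_{N,j}$ are pairwise orthogonal projections (being part of the $j$th column of a magic unitary), so their sum is again a projection; hence $\sum_{i=1}^M \widetilde{P}_{ij} = 1 - \sum_{i=M+1}^N \widetilde{P}_{ij} \leq 1$ in the $C^*$-sense. Summing this inequality over the $K$ values $j = M+1,\ldots,N$ gives
$$\sum_{j=M+1}^N \sum_{i=1}^M \widetilde{P}_{ij} \leq K.$$
Combining with the previous identity yields $M - \sum_{i,j} P_{ij} \leq K$, i.e.\ $\sum_{ij} P_{ij} \geq M-K$, which is exactly the claim.

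There is essentially no serious obstacle: the argument is a one-step bookkeeping using the magic relations and positivity. The only minor point to flag is that the comparison $\sum_{i=1}^M \widetilde{P}_{ij} \leq 1$ must be read as a positive-operator inequality in $A$, which is legitimate precisely because the complementary sum is a finite sum of pairwise orthogonal projections; this step tacitly uses nothing more than the definition of a magic matrix.
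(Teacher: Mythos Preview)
Your argument is correct and is essentially the same as the paper's: writing the completion in block form $\widetilde{P}=\begin{pmatrix}P&Q\\ R&S\end{pmatrix}$, the paper observes that the sum of all entries of $P$ plus that of $Q$ equals $M$ (row sums), while the sum of entries of $Q$ plus that of $S$ equals $K$ (column sums), so the entry-sums satisfy $P'-S'=M-K$ and hence $P'\geq M-K$ since $S'\geq 0$. Your computation is exactly this bookkeeping spelled out entrywise, with the inequality $Q'\leq K$ replacing the equivalent $S'\geq 0$.
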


\begin{proof}
Let us write the magic unitary in the statement as follows:
$$\widetilde{P}=\begin{pmatrix}P&Q\\ R&S\end{pmatrix}$$

Now if we denote by $X\to X'$ the sum of entries operation, then $P'+Q'=M$ and $Q'+S'=K$. Thus we have $P'-S'=M-K$, and so $P'\geq M-K$.
\end{proof}

In the commutative case the situation, unsurprisingly, can be described completely.

\begin{proposition}
Let $P\in M_M(A)$ be a submagic matrix, with $A$ commutative, and write $A=C(S)$ with $S\subset\widetilde{S}_M$, as in Proposition 1.6. Then the following are equivalent:
\begin{enumerate}
\item $P$ completes to a magic matrix $\widetilde{P}\in M_N(A)$.

\item Each $\sigma\in S$ has at most $K$ undefined values, where $K=N-M$.

\item We have $\sum_{ij}P_{ij}\geq M-K$.
\end{enumerate}
\end{proposition}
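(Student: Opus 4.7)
\medskip

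\noindent\textbf{Proof plan.} The plan is to reduce everything to pointwise combinatorics of partial permutations via Proposition 1.6. By that proposition, each $\sigma\in S$ is a partial bijection $\sigma:X_\sigma\simeq Y_\sigma$ between subsets of $\{1,\ldots,M\}$, and $P_{ij}(\sigma)=1$ iff $\sigma(j)=i$, so that $\sum_{ij}P_{ij}(\sigma)=|X_\sigma|$. Since $\sigma$ has exactly $M-|X_\sigma|$ undefined values, the function $\sum_{ij}P_{ij}$ is pointwise $\geq M-K$ on $S$ iff every $\sigma\in S$ has at most $K$ undefined values; this gives $(2)\Leftrightarrow(3)$ for free, and $(1)\Rightarrow(3)$ then follows directly from Proposition 3.3.

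For the remaining implication $(2)\Rightarrow(1)$, I would construct the magic completion $\widetilde P\in M_N(C(S))$ pointwise. Fix $\sigma\in S$ and set $X'_\sigma=\{1,\ldots,M\}\setminus X_\sigma$, $Y'_\sigma=\{1,\ldots,M\}\setminus Y_\sigma$, $Z=\{M+1,\ldots,N\}$; by (2) one has $|X'_\sigma|=|Y'_\sigma|\leq K=|Z|$. First choose any injection $\alpha:X'_\sigma\hookrightarrow Z$, which pushes the undefined columns of $\sigma$ into the new rows; then choose any bijection $\beta:Z\to Y'_\sigma\cup(Z\setminus\alpha(X'_\sigma))$, which exists since both sides have cardinality $K$. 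Gluing $\sigma$, $\alpha$ and $\beta$ together yields a full permutation $\widetilde\sigma\in S_N$ whose $N\times N$ permutation matrix has upper-left $M\times M$ block equal to $P(\sigma)$. Declaring $\widetilde P(\sigma)$ to be this matrix, for each $\sigma\in S$ independently, produces the required magic extension of $P$.

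I do not expect a genuine obstacle here; the only sanity check is that the pointwise assignment $\sigma\mapsto\widetilde P(\sigma)$ really assembles into an element of $M_N(C(S))$ restricting to $P$, but this is automatic since $S\subset\widetilde S_M$ is finite (hence discrete) and $C(S)$ is just the algebra of all complex-valued functions on it, with the magic relations checked pointwise. The substantive content is simply the combinatorial fact $|X_\sigma|=|Y_\sigma|$, which makes the symmetric extension of $\sigma$ go through as soon as $K$ new indices are available on each side.
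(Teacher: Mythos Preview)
Your proof is correct and follows essentially the same route as the paper: both establish $(2)\Leftrightarrow(3)$ pointwise via $\sum_{ij}P_{ij}(\sigma)=|X_\sigma|$, invoke Proposition~3.3 for $(1)\Rightarrow(3)$, and prove $(2)\Rightarrow(1)$ by extending each $\sigma\in S$ to a full permutation of $\{1,\ldots,N\}$ and reading off the resulting permutation matrix. The only cosmetic difference is that the paper writes down one canonical extension (ordering $X_\sigma^c$ and $Y_\sigma^c$ increasingly and matching them with $\{M+1,\ldots,M+L\}$), which it then reuses in Theorem~3.6, whereas you allow arbitrary choices of $\alpha,\beta$; since $\widetilde S_M$ is finite and hence $S$ is discrete, either construction yields a legitimate element of $M_N(C(S))$, exactly as you note.
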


\begin{proof}
$(1)\implies(3)$ follows from Proposition 3.3. Also, $(2)\iff(3)$ is clear, because $\sum_{ij}P_{ij}(\sigma)=|Im(\sigma)|$. In order to prove now $(2)\implies(1)$, consider the set $\widetilde{S}_{M,N}\subset\widetilde{S}_M$ consisting of partial permutations having at most $K$ undefined values.

Given $\sigma\in\widetilde{S}_{M,N}$, we define $\sigma'\in S_N$ in the following way. We write $\sigma:X\simeq Y$, with $X,Y\subset\{1,\ldots,M\}$, then we set $L=|X^c|=|Y^c|$ and we write $X^c=\{x_1,\ldots,x_L\}$ and  $Y^c=\{y_1,\ldots,y_L\}$ with $x_1<\ldots<x_L$ and $y_1<\ldots<y_L$, and finally we set:
$$\sigma'(i)=
\begin{cases}
\sigma(i)&\ if\ i\in X\\
M+r&\ if\ i=x_r\\
y_r&\ if\ i=M+r\\
i&\ if\ i>M+L
\end{cases}$$

This construction $\sigma\to\sigma'$ produces an embedding $\widetilde{S}_{M,N}\subset S_N$, and hence an embedding $S\subset S_N$, having the property $\sigma'(j)=i\iff\sigma(j)=i$. Now if we let $\varphi:C(S_N)\to C(S)$ be the map given by $\varphi(f)(\sigma)=f(\sigma')$, and we set $\widetilde{P}_{ij}=\varphi(u_{ij})$, we are done.
\end{proof}

The last proof shows in particular that if a magic submatrix with commuting entries can be completed, the completion can also be chosen so that the entries commute.

Returning to the noncommutative situation we can describe what happens when $N=2$.

\begin{proposition}
Any $2\times 2$ submagic matrix can be completed to a $4\times 4$ magic matrix, but not necessarily to a $3\times 3$ magic matrix.
\end{proposition}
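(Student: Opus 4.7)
The plan is to handle the two halves separately, both by explicit constructions: a universal $4\times 4$ magic completion formula, and a concrete $2\times 2$ counterexample to $3\times 3$ completability.

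For the positive half, given a $2\times 2$ submagic matrix
$$P=\begin{pmatrix} p & q \\ r & s\end{pmatrix}\in M_2(A),$$
I would propose the explicit completion
$$\widetilde{P}=\begin{pmatrix} p & q & 1-p-q & 0 \\ r & s & 0 & 1-r-s \\ 1-p-r & 0 & p & r \\ 0 & 1-q-s & q & s\end{pmatrix}\in M_4(A).$$
The verification splits into three routine steps: (a) each of $p+q,\,r+s,\,p+r,\,q+s$ is a sum of two mutually orthogonal projections, hence itself a projection, so the complements $1-p-q$, $1-r-s$, $1-p-r$, $1-q-s$ are again projections; (b) every row and every column of $\widetilde{P}$ clearly sums to $1$; (c) the pairwise orthogonality of the entries in each row and column reduces to the four given relations $pq=rs=pr=qs=0$ together with their adjoints. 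The structural reason the formula works is that the lower-right $2\times 2$ block is the transpose of $P$, which is itself submagic since transposition swaps the roles of row and column orthogonality.

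For the negative half, I would produce a concrete obstruction via Proposition 3.2 applied with $N=3$. Take the non-commuting rank-one projections
$$p=\begin{pmatrix}1&0\\0&0\end{pmatrix},\qquad s=\tfrac{1}{2}\begin{pmatrix}1&1\\1&1\end{pmatrix}$$
in $M_2(\mathbb{C})$, and form the (trivially submagic) matrix
$$P=\begin{pmatrix} p & 0 \\ 0 & s\end{pmatrix}\in M_2(M_2(\mathbb{C})).$$
Proposition 3.2 tells us that $P$ admits a $3\times 3$ magic completion if and only if the element $p+s-1$ is a projection; but a direct diagonalisation gives that $p+s$ has eigenvalues $1\pm 1/\sqrt{2}$, so the spectrum of $p+s-1$ is $\{\pm 1/\sqrt{2}\}$, which is not contained in $\{0,1\}$. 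Since being a projection is an algebraic property independent of the ambient algebra, passing to any larger $C^*$-algebra cannot rescue the completion.

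The main (and really only) obstacle is spotting the right formula for $\widetilde{P}$ in the positive direction; once it is written down both the projection and orthogonality checks are mechanical, and no delicate $C^*$-algebraic argument is needed. The appearance of a transposed copy of $P$ in the lower-right corner is precisely the observation that makes the four given orthogonality relations cover all the orthogonality constraints of the new rows and columns simultaneously.
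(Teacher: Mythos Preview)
Your proof is correct, and for the positive half it takes a genuinely different route from the paper. The paper represents the submagic matrix on a Hilbert space and decomposes according to the central projection $p^{\perp}\wedge q^{\perp}$ (which commutes with all four entries), thereby reducing to the two special cases $\begin{pmatrix}p&0\\0&q\end{pmatrix}$ and $\begin{pmatrix}0&r\\s&0\end{pmatrix}$, each of which is easy to complete. Your explicit $4\times 4$ formula bypasses this entirely: it works directly in the abstract unital $C^{*}$-algebra $A$, requires no Hilbert-space representation or meet of projections, and in fact shows slightly more, namely that the completion can always be taken with entries in $A$ itself rather than in some enveloping $B(\mathsf H)$. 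The paper's argument, on the other hand, explains \emph{why} such a formula ought to exist, by exhibiting the underlying block structure. For the negative half both arguments are the same in spirit: the paper takes $p=q\neq 1$ with $r=s=0$, you take two non-commuting rank-one projections with the off-diagonal zeros, and in each case Proposition~3.2 (or the direct computation that $p+s-1$ has spectrum outside $\{0,1\}$) gives the obstruction; your remark that this is intrinsic to the element and hence survives passage to a larger algebra is a worthwhile clarification.
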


\begin{proof}
Begin by certain general considerations regarding 2 by 2 partial magic unitaries. We want to understand the structure of the following matrix of projections: $\begin{bmatrix} p & r \\ s & q\end{bmatrix}$, with the relations as follows: $r,s \leq p^{\perp}$, $r,s \leq q^{\perp}$, and we think of $p,q,r,s$ as projections on some concrete Hilbert space $\mathsf{H}$. Then $\mathsf{H}$  splits into two pieces, and all projections in question leave them invariant (this is a consequence of the fact that $p\wedge q$ commutes with all $p,q,r,s$.

On these pieces we have respectively
\begin{rlist}
\item $p^{\perp} \wedge q^{\perp}=0$ (and then $r=s=0$);
\item $p^{\perp} \wedge q^{\perp}=1$ (and then $p=q=0$ and $r,s$ are arbitrary).
\end{rlist}

It is easy to see that we can `complete' the 2 by 2 magic unitary considering these pieces separately. In other words it suffices to see that  $\begin{bmatrix} p & 0 \\ 0 & q\end{bmatrix}$ and $\begin{bmatrix} 0 & r \\ s & 0\end{bmatrix}$ can be completed to a 4 by 4 magic unitary, and also to see that 3 by 3 completion is impossible if for example $p=q\neq 1$.
\end{proof}

We can conclude from the above the following statement, where by $\widetilde{A}_s(M,N)$ we denote the universal $C^*$-algebraic quotient of $\widetilde{A}_s(M)$ having the property that the relation $\sum_{ij}P_{ij}\geq M-K$ holds, where $N=M+K$.

\begin{theorem}
The universal quotient of $\widetilde{A}_s(M)$  having the property that a submagic matrix $P\in M_M(A)$ is completable to a magic matrix $\widetilde{P}\in M_N(A)$ if and only if the representation $\pi_P:\widetilde{A}_s(M)\to A$ factorizes through $\widetilde{A}_s(M,N)$ exists, and is equal to $\widetilde{A}_s(M,N)$, in the following cases:
\begin{enumerate}
\item $M=N$: here $\widetilde{A}_s(N,N)=\widetilde{A}_s(N)$.
\item $M=N-1$: here $\widetilde{A}_s(N-1,N)=A_s(N)$.
\item $M=2,N=4$: here $\widetilde{A}_s(2,4)=\widetilde{A}_s(2)$.
\end{enumerate}
In addition, we have $\widetilde{A}_s(M,N)_{class}=C(\widetilde{S}_{M,N})$, where $\widetilde{S}_{M,N}\subset\widetilde{S}_M$ is the set of partial permutations having at most $N-M$ undefined entries, and the latter algebra is the universal quotient of $\widetilde{A}_s(M)_{class}$  having the property that a commutative submagic matrix $P\in M_M(A)$ is completable to a commutative magic matrix $\widetilde{P}\in M_N(A)$ if and only if the representation $\pi_P:\widetilde{A}_s(M)\to A$ factorizes through $\widetilde{A}_s(M,N)_{class}$.
\end{theorem}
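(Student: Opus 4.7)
The plan is to handle the three cases separately, after noting that the ``only if'' direction is automatic from Proposition 3.5 in every case: any representation of $\widetilde{A}_s(M)$ for which the submagic matrix completes must satisfy $\sum_{ij} P_{ij} \geq M - K$, so the desired universal quotient is necessarily a quotient of $\widetilde{A}_s(M, N)$. The real content is the converse, namely that inside $\widetilde{A}_s(M, N)$ itself the canonical submagic matrix admits a magic completion, after which no further relations need to be imposed.

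Cases (1) and (3) should be quick. When $M = N$ and $K = 0$, the row sums $\sum_j P_{ij}$ are projections bounded by $1$, so the relation $\sum_{ij} P_{ij} \geq M$ forces equality and hence the full magic relations; completability is then tautological since the submagic matrix already \emph{is} its own magic completion. When $M = 2$ and $N = 4$, the relation $\sum P_{ij} \geq 0$ is vacuous, so $\widetilde{A}_s(2, 4) = \widetilde{A}_s(2)$, and Proposition 3.6 provides the required completion unconditionally for every representation.

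The substantive step is case (2). With $M = N - 1$, I would rewrite the relation $\sum_{ij} P_{ij} \geq N - 2$, after setting $P_{iN} := 1 - \sum_j P_{ij}$, in the equivalent form $\sum_{i=1}^{N-1} P_{iN} \leq 1$. Each $P_{iN}$ is automatically a projection, being the complement of a sum of pairwise orthogonal projections on a single row. I would then invoke the elementary fact that a finite family of projections whose sum is dominated by $1$ must be pairwise orthogonal, which yields condition (2) of Proposition 3.3; that proposition then produces the magic completion and identifies $\widetilde{A}_s(N-1, N)$ with $A_s(N)$ via the universal property of Wang's algebra applied to the completed $N \times N$ magic matrix.

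For the classical statement, Proposition 1.7(4) identifies $\widetilde{A}_s(M)_{class}$ with $C(\widetilde{S}_M)$. Under Gelfand duality one has $\sum_{ij} u_{ij}(\sigma) = |\mathrm{Im}(\sigma)|$, so the relation $\sum \geq M - K$ selects exactly the partial permutations with at most $K$ undefined values, namely $\widetilde{S}_{M, N}$. The universal property for commutative completions then follows by combining Proposition 3.5 with the explicit commutative completion construction given in the proof of Proposition 3.5, based on the embedding $\widetilde{S}_{M, N} \hookrightarrow S_N$. The one genuine obstacle I foresee is the reformulation used in case (2): recognising that the single scalar inequality $\sum \geq M - K$ is exactly strong enough to force orthogonality of the boundary projections $P_{iN}$. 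This neatly explains why only the three listed cases admit such a clean description, and suggests that a uniform statement covering general $M < N$ would likely require further, more subtle relations than just this one scalar bound.
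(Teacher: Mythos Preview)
Your plan is correct and matches the paper's own approach exactly: the paper's proof is the single sentence ``This follows indeed from the earlier results,'' and you have correctly identified which earlier results are meant and how they combine in each of the three cases and in the commutative addendum. Two minor remarks: your proposition numbers are shifted relative to the paper (what you call 3.3, 3.5, 3.5, 3.6 are the paper's 3.2, 3.3, 3.4, 3.5 respectively), and your argument in case~(1) in fact shows $\widetilde{A}_s(N,N)=A_s(N)$ rather than $\widetilde{A}_s(N)$ --- which is the correct conclusion and indicates that the ``$=\widetilde{A}_s(N)$'' in the theorem statement is a typo.
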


\begin{proof}
This follows indeed from the earlier results.
\end{proof}

Finally, let us go back to the completion problem for partial Hadamard matrices. Here is a related result, by using the notations in Proposition 3.1 above, and its proof.

\begin{proposition}
For $H\in M_{(N-1)\times N}(\mathbb T)$ partial Hadamard, the following are equivalent:
\begin{enumerate}
\item $P_{ij}=Proj(R_i/R_j)$ is completable to a $N\times N$ magic matrix.

\item With $G_{kl}=\frac{1}{N}|<C_k,C_l>|^2$, the matrix $G-(N-2)I$, where $I$ denotes the identity matrix,  is a projection.

\item With $span(R_1,\ldots,R_{N-1})^\perp=\mathbb CZ$ we have $HDH^*=c1_N$, where $D=diag|Z_i|^2$.
\end{enumerate}
\end{proposition}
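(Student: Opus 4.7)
My plan is to apply Proposition 3.2 directly to the $(N-1)\times(N-1)$ submagic matrix $P = (P_{ij})_{i,j=1}^{N-1} \in M_{N-1}(M_N(\mathbb{C}))$, matching conditions (4) and (2) of that proposition respectively with (2) and (3) of the statement here.

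For $(1)\Leftrightarrow(2)$, I will use condition (4) of Proposition 3.2: completability is equivalent to $P_{NN} := \sum_{i,j=1}^{N-1} P_{ij} - (N-2)\cdot 1$ being a projection. Writing $P_{ij} = \frac{1}{N}|\xi_{ij}\rangle\langle\xi_{ij}|$ with $\xi_{ij} = R_i/R_j$ and performing the entrywise double sum, the $i$- and $j$-sums decouple to give
$$\Big(\sum_{i,j=1}^{N-1} P_{ij}\Big)_{kl} = \frac{1}{N}\Big|\sum_{i=1}^{N-1} R_{ik}\overline{R_{il}}\Big|^2 = \frac{1}{N}|\langle C_k, C_l\rangle|^2 = G_{kl},$$
so $\sum_{ij} P_{ij} = G$, and (2) becomes verbatim the statement of Proposition 3.2(4).

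For $(1)\Leftrightarrow(3)$, I will use condition (2) of Proposition 3.2: completability is equivalent to the rank-one projections $P_{iN} := 1 - \sum_j P_{ij}$ being pairwise orthogonal. The crux is to identify the 1-dimensional range of each $P_{iN}$. Setting $D_i = \textup{diag}(R_{i1},\ldots,R_{iN})$ (a unitary, since $|R_{il}|=1$), I can rewrite $\xi_{ij} = D_i\overline{R_j}$, so that $\textup{span}\{\xi_{ij}\}_{j=1}^{N-1} = D_i \cdot \overline{\textup{span}(R_1,\ldots,R_{N-1})} = D_i \cdot (\overline{Z})^\perp$. Taking orthogonal complements and using unitarity of $D_i$, the range of $P_{iN}$ is $\mathbb{C}\eta_i$, with $\eta_i := D_i\overline{Z}$, i.e.\ $(\eta_i)_l = R_{il}\overline{Z_l}$.

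A direct computation then yields $\langle\eta_i,\eta_j\rangle = \sum_l R_{il}\overline{R_{jl}}|Z_l|^2 = (HDH^*)_{ij}$ and $\|\eta_i\|^2 = \|Z\|^2$ for every $i$. Pairwise orthogonality of the $\eta_i$ is therefore equivalent to $HDH^*$ being diagonal with constant diagonal entry $\|Z\|^2$, i.e.\ to $HDH^* = \|Z\|^2 \cdot 1$, which is condition (3) with $c=\|Z\|^2$. The one place where a bit of care is needed is the orthogonal-complement computation for $\textup{span}\{\xi_{ij}\}_j$: it relies on the elementary identity $\overline{W^\perp} = (\overline{W})^\perp$ for subspaces $W \subset \mathbb{C}^N$ and on the fact that unitary $D_i$ commutes with the operation $\perp$. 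I do not foresee any serious obstacle beyond this bookkeeping.
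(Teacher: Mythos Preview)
Your proof is correct and follows essentially the same route as the paper. For $(1)\Leftrightarrow(2)$ you perform exactly the paper's computation $\sum_{ij}P_{ij}=G$ and invoke Proposition~3.2(4); for $(1)\Leftrightarrow(3)$ the paper phrases the argument as ``completing the submagic basis'' by setting $\xi_{iN}=R_i\cdot\bar Z$, but this vector is precisely your $\eta_i=D_i\overline{Z}$, and both arguments reduce to the same inner-product calculation $\langle\eta_i,\eta_j\rangle=(HDH^*)_{ij}$ together with the observation that the diagonal entries are automatically constant.
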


\begin{proof}
$(1)\iff(2)$ Since for $\xi\in\mathbb C^N$ we have $Proj(\xi)=\frac{1}{||\xi||^2}(\xi_k\bar{\xi}_k)_{kl}$, we obtain:
$$P_{ij}=\frac{1}{N}\left(\frac{H_{ik}}{H_{jk}}\cdot\frac{H_{jl}}{H_{il}}\right)_{kl}=\frac{1}{N}\left(\frac{H_{ik}}{H_{il}}\cdot\frac{H_{jl}}{H_{jk}}\right)_{kl}$$

Now by summing over $i,j$, we obtain:
$$\sum_{ij}P_{ij}=\frac{1}{N}(<C_k,C_l><C_l,C_k>)_{kl}=\frac{1}{N}(|<C_k,C_l>|^2)_{kl}$$

The result follows now from Proposition 3.2.

$(1)\iff(3)$ The point here is that the submagic matrix $P=(P_{ij})$ completes if and only if the corresponding submagic basis $\xi=(\xi_{ij})$ completes. Now since $\xi_{ij}=R_i/R_j$, we have no choice for the completion, because we must set $\xi_{iN}=R_i\cdot\bar{Z}$ and $\xi_{Ni}=Z/R_j$:
$$\xi=\begin{pmatrix}
R_1/R_1&R_1/R_2&\ldots&R_1/R_{N-1}&R_1\cdot\bar{Z}\\
R_2/R_1&R_2/R_2&\ldots&R_2/R_{N-1}&R_2\cdot\bar{Z}\\
\ldots&\ldots&\ldots&\ldots&\ldots\\
R_{N-1}/R_1&R_{N-1}/R_2&\ldots&R_{N-1}/R_{N-1}&R_{N-1}\cdot\bar{Z}\\
Z/R_1&Z/R_2&\ldots&Z/R_{N-1}&*
\end{pmatrix}$$

We conclude that $P$ completes if and only if the vectors $\xi_{iN}=R_i\cdot\bar{Z}$ are orthogonal, or, equivalently, if the vectors $\xi_{Nj}=Z/R_j$ are orthogonal. But, we have:
$$<\xi_{iN},\xi_{jN}>=<R_i\cdot\bar{Z},R_j\cdot\bar{Z}>=\sum_kH_{ik}\bar{H}_{jk}|Z_k|^2=(HDH^*)_{ij}$$

Thus $P$ completes if and only if $HDH^*$ is diagonal. On the other hand, we have:
$$(HDH^*)_{ii}=\sum_kH_{ik}|Z_k|^2\bar{H}_{ik}=\sum_k|Z_k|^2$$

Thus $P$ completes if and only if $HDH^*$ is a multiple of identity, as claimed.
\end{proof}

A quick comparison between Proposition 3.1 and Proposition 3.7 might suggest that, by some linear algebra miracle, the conditions found there are in fact all equivalent. We do not know if it is so, and we have no further results here.

\section{Conclusion}

We have seen in this paper that the well-known construction $H\to G$, associating a quantum permutation group $G\subset S_N^+$ to a complex Hadamard matrix $H\in M_N(\mathbb T)$, has a straightforward extension to the partial Hadamard matrix case, $H\in M_{M\times N}(\mathbb T)$.

In order for our construction to be more effective, the quantum semigroup $\widetilde{S}_M^+$ of quantum partial permutations of $\{1,\ldots,M\}$ would need to be better understood; and there are several interesting algebraic questions here, raised throughout the paper.

In addition, we have the following big question: is there any way of developing some classical analysis tools for this compact quantum semigroup $\widetilde{S}_M^+$? Note that this is true for Wang's quantum permutation group $S_N^+$, cf. \cite{bco} and subsequent papers.

\vspace*{0.5 cm}

\noindent \textbf{Acknowledgment:} We would like to thank the referee for careful reading of our manuscript and several useful comments improving the presentation.


\begin{thebibliography}{99}

\bibitem{bbi}T. Banica and J. Bichon, Hopf images and inner faithful representations, {\em Glasg. Math. J.} {\bf 52} (2010), 677--703.

\bibitem{bco}T. Banica and B. Collins,  Integration over quantum permutation groups, {\em J. Funct. Anal.} {\bf 242} (2007), 641--657.

\bibitem{bni}T. Banica and R. Nicoara, Quantum groups and Hadamard matrices, {\em Panamer. Math. J.} {\bf 17} (2007), 1--24.

\bibitem{bns}T. Banica, I. Nechita, and J.-M. Schlenker, Submatrices of Hadamard matrices: complementation results, {\em Electron. J. Linear Algebra} {\bf 27} (2014), 197--212.

\bibitem{bss}T. Banica, A. Skalski and P. So\l tan, Noncommutative homogeneous spaces: the matrix case, {\em J. Geom. Phys.} {\bf 62} (2012), 1451--1466.

\bibitem{cflw} R. Craigen, G. Faucher, R. Low and T. Wares,
Circulant partial Hadamard matrices,
\emph{Linear Algebra Appl.} \textbf{439} (2013), no. 11, 3307--3317.


\bibitem{haa}U. Haagerup, Orthogonal maximal abelian $*$-subalgebras of the $n\times n$ matrices and cyclic $n$-roots, in ``Operator algebras and quantum field theory'', International Press (1997), 296--323.

\bibitem{hal}M. Hall, Integral matrices $A$ for which $AA^T=mI$, in ``Number theory and algebra'', Academic Press (1977), 119--134.

\bibitem{jsu}V.F.R. Jones and V.S. Sunder, Introduction to subfactors, Cambridge Univ. Press (1997).

\bibitem{kms}C. Koukouvinos, M. Mitrouli and J. Seberry, An algorithm to find formulae and values of minors for Hadamard matrices, {\em Linear Algebra Appl.} {\bf 330} (2001), 129--147.

\bibitem{kms2}C. Koukouvinos, E. Lappas, M. Mitrouli and J. Seberry, An algorithm to find formulae and values of minors for Hadamard matrices. II. Linear Algebra Appl. 371 (2003), 111–124.

\bibitem{lle}W. de Launey and D.A. Levin, A Fourier-analytic approach to counting partial Hadamard matrices, {\em Cryptogr. Commun.}  {\bf 2} (2010), 307--334.

\bibitem{pop}S. Popa, Orthogonal pairs of $*$-subalgebras in finite von Neumann algebras, {\em J. Operator Theory} {\bf 9} (1983), 253--268.

\bibitem{RaeSin} I.Raeburn and A.Sinclair, The $C^*$-algebra generated by two projections, \emph{Math.\,Scand.} \textbf{65} (1989), 278--290.




\bibitem{sso}A. Skalski and P.M. So\l tan, Projective limits of quantum symmetry groups and the doubling construction for Hopf algebras, {\em Infin. Dimens. Anal. Quantum Probab.} \textbf{17} (2014), no.\,2, 1450012, 27 pages.

\bibitem{ver}E. Verheiden, Integral and rational completions of combinatorial matrices, {\em J. Combin. Theory Ser. A} {\bf 25} (1978), 267--276.

\bibitem{wan}S. Wang, Quantum symmetry groups of finite spaces, {\em Comm. Math. Phys.} {\bf 195} (1998), 195--211.

\bibitem{Wegge} N.E.\,Wegge-Olsen,
``K-theory and $C^*$-algebras. A friendly approach,''  Oxford University Press, New York, 1993.


\bibitem{wo1}S.L. Woronowicz, Compact matrix pseudogroups, {\em Comm. Math. Phys.} {\bf 111} (1987), 613--665.

\bibitem{wo2}S.L. Woronowicz, Tannaka-Krein duality for compact matrix pseudogroups. Twisted SU(N) groups, {\em Invent. Math.} {\bf 93} (1988), 35--76.

\end{thebibliography}
\end{document}